\newtheorem{theorem}{Theorem}[section]
\newtheorem{lemma}[theorem]{Lemma}
\newtheorem{proposition}[theorem]{Proposition}
\newtheorem{corollary}[theorem]{Corollary}
\theoremstyle{definition}
\newtheorem{definition}[theorem]{Definition}
\newtheorem{example}[theorem]{Example}
\newtheorem{question}[theorem]{Question}
\newtheorem{remark}[theorem]{Remark}
\newcommand{\kk}{\bold k}
\newcommand{\g}{\mathfrak{g}}
\numberwithin{equation}{section} 
\begin{document}

\title[$p$-curvature of periodic pencils of flat connections]{$p$-curvature of periodic pencils of flat connections}

\author{Pavel Etingof}						

\address{Department of Mathematics, MIT, Cambridge, MA 02139, USA}

\author{Alexander Varchenko}

\address{Department of Mathematics, University of North Carolina at Chapel Hill, 
CB\# 3250 Phillips Hall
Chapel Hill, N.C. 27599, USA} 

\maketitle

\centerline{\bf To Boris Feigin on his 70-th birthday with admiration} 

\begin{abstract} In \cite{EV} we introduced the notion of a periodic pencil of flat connections on a smooth variety $X$. Namely, a pencil is a linear family of flat connections  $\nabla(s_1,...,s_n)=d-\sum_{i=1}^r\sum_{j=1}^ns_jB_{ij}dx_i,$
 where $\lbrace x_i\rbrace$ are coordinates on $X$ and $B_{ij}: X\to {\rm Mat}_N$ are matrix-valued regular functions. A pencil is periodic if it is generically invariant under the shifts $s_j\mapsto s_j+1$ up to isomorphism. In this paper we show that in characteristic $p>0$, the $p$-curvature operators $\lbrace C_i,1\le i\le r\rbrace$ of a periodic pencil $\nabla$ are isospectral to the commuting endomorphisms $C_i^*:=\sum_{j=1}^n (s_j-s_j^p)B_{ij}^{(1)}$, where $B_{ij}^{(1)}$ is the Frobenius twist of $B_{ij}$. Using the results of \cite{EV}, this allows us to compute the eigenvalues of the $p$-curvature for many important examples of pencils of flat connections, including Knizhnik-Zamolodchikov (KZ), Casimir, and Dunkl connections, their confluent limits, and equivariant quantum connections for conical symplectic resolutions with finitely many torus fixed points. In particular, for rational values of parameters these eigenvalues are zero, so the connections are globally nilpotent. We also show that every periodic pencil has regular singularities and its residues have rational eigenvalues for rational values of parameters. In particular, this holds for the aforementioned quantum connections if they have rational coefficients. Also we generalize these results to irregular pencils (KZ, Casimir, Dunkl, and Toda), and relate them in the Dunkl case to representations of rational Cherednik algebras. Finally, we extend our main result to pseudo-pencils and discuss the generalization to difference equations. 
\end{abstract} 

\tableofcontents

\section{Introduction}

Grothendieck's $p$-{\bf curvature} is an important invariant of linear ODE on algebraic curves and, more generally, of holonomic systems of linear PDE on algebraic varieties. For example, if such a system over $\Bbb C$ has an algebraic fundamental solution then its reduction at almost all primes $p$ has zero $p$-curvature, and the converse is the Grothendieck-Katz conjecture. Further, if such an irreducible system is geometric (i.e., occurs as a composition factor in a Gauss-Manin connection on the cohomology of fibers of a smooth morphism) then by a theorem of N. Katz it is {\bf globally nilpotent} (i.e., its reduction at almost all primes $p$ has nilpotent $p$-curvature), and the converse is the Andr\'e-Bombieri-Dwork conjecture. 

However, for a general holonomic system the $p$-curvature (and even its spectrum) is hard to compute, even if $X$ is an open subset of $\Bbb A^1$. The goal of this paper is to show that nevertheless for many important holonomic systems the $p$-curvature can be studied effectively, and in particular one can compute
its spectrum.  

Namely, in \cite{EV} we introduced the notion of a {\bf periodic pencil of flat connections}\footnote{ In algebraic geometry, the term ``pencil" is normally used for 1-parameter linear families, rather than multi-parameter ones, which is a slight mismatch of terminology. Still, we will use the term ``pencil" to be consistent with our previous paper \cite{EV}.} and showed that many important connections, such as Knizhnik-Zamolodchikov (KZ), Casimir, and Dunkl connections, as well as equivariant quantum connections of conical symplectic resolutions with finitely many torus fixed points fall into this category.\footnote{To realize the equivariant quantum connection of a conical symplectic resolution with finitely many torus fixed points as a periodic pencil, one may use the basis of cohomological stable envelopes of \cite{MO} for the equivariant cohomology. Also, since such connections are  generally not known to converge and have rational coefficients, in general one has to work with formal connections, i.e., ones whose coefficients are formal series in the Novikov variables. However, our main results extend mutatis mutandis to the formal case.} Our main motivation in introducing this notion was the computation of the spectrum of the $p$-curvature of such connections in positive characteristic, which is what we do in this paper. Namely, our main result (Theorem \ref{main1}) states that the $p$-curvature operators 
$\lbrace C_i, 1\le i\le r\rbrace$ of a periodic pencil 
$\nabla(\bold s)=d-\sum_{i=1}^r\sum_{j=1}^n s_jB_{ij}dx_i$ are isospectral to
the commuting endomorphisms
 $\sum_{i=1}^r\sum_{j=1}^n (s_j-s_j^p)B_{ij}^{(1)}dx_i$, where $x_i$ are local coordinates on $X$, $B_{ij}: X\to {\rm Mat}_N$ are regular functions, and $B_{ij}^{(1)}$ is the Frobenius twist of $B_{ij}$. In particular, it follows that periodic pencils over $\overline{\Bbb Q}$ are globally nilpotent for $\bold s\in \Bbb Q^n$. Using this theorem, we compute the spectrum of the $p$-curvature for the above examples of periodic pencils and show (using a theorem of Katz) that every periodic pencil over $\Bbb C$ has regular singularities and its residues have rational eigenvalues for $\bold s\in \Bbb Q^n$. Thus, 
as a by-product we obtain an interesting corollary in enumerative geometry: if the quantum connection of a conical symplectic resolution with finitely many torus fixed points has rational coefficients, then it has regular singularities and its residues have rational eigenvalues for rational values of parameters.

Theorem \ref{main1} also has an application to symplectic geometry. Namely, it is shown in \cite{Lee} that if $\mathcal X$ is a conical symplectic resolution with finitely many fixed points under the action of a torus $\bold T$,
 then the equivariant version of Fukaya's quantum Steenrod operation ${\rm St}$ on $H^*_{\bold T}(\mathcal X,\Bbb F_p)$ coincides with the $p$-curvature $C$ of the equivariant quantum connection $\nabla(\bold s)$ for $\mathcal X$, provided the latter has simple spectrum (and conjecturally always). In any case, it is proved in \cite{Lee} that these two operators commute and coincide after raising to power $p^i$ for some $i$, so in particular they are isospectral. On the other hand, the pencil $\nabla(\bold s)$ is periodic due to existence of geometric shift operators (see \cite{BMO,MO} and references therein), so our result yields the spectrum of $C$, and thereby the spectrum of ${\rm St}$.    
 
The organization of the paper is as follows. 

In Section 2 we discuss preliminaries. 

In Section 3 we prove Theorem \ref{main1}. We then apply this theorem to show that periodic pencils are globally nilpotent at rational parameter values and have regular singularities and its residues have rational eigenvalues for rational values of parameters. Further, we introduce infinitesimally-periodic and mixed-periodic pencils, which arise as confluent limits of periodic pencils, and generalize Theorem \ref{main1} to such pencils. Next, we describe applications of Theorem \ref{main1} to periodic pencils from \cite{EV}.  Finally, we discuss the connection of our results with Katz's theorem on global nilpotence of Gauss-Manin connections, with the Andr\'e-Bombieri-Dwork conjecture, and deduce regularity of quantum connections for symplectic resolutions. 

In Section 4 we discuss irregular pencils (KZ, Casimir, Dunkl, and Toda) and compute the spectrum of their $p$-curvature. In particular, in the case of Dunkl connections, we discuss the relation of their $p$-curvature with representation theory of rational Cherednik algebras in positive characteristic. 

In Section 5, we generalize Theorem \ref{main1} to pseudo-pencils, 
which satisfy the pencil condition (homogeneous linearity in $\bold s$) only locally
near $\infty$ and $0$, up to a gauge transformation. This allows 
us to include many more examples, in particular some examples 
of generalized Gauss-Manin connections. 

Finally, in Section 6 we discuss generalization of the theory 
of periodic families of flat connections and their $p$-curvature to  
difference and $q$-difference connections.\footnote{The first version of this paper appeared in January 2024. Subsections 3.8, 4.5 and Sections 5, 6 were added in December 2024.}
 
{\bf Acknowledgements.} The authors thank Vadim Vologodsky, Peter Koroteev, Jae Hee Lee and Andrey Smirnov for useful discussions and anonymous referees for helpful comments. P. E.'s work was partially supported by the NSF grant DMS-2001318 and  A. V.'s work was partially supported by the NSF grant DMS-1954266. 

\section{Preliminaries} 
Throughout the paper,  $\kk$ denotes an algebraically closed field, $V$ a finite dimensional $\kk$-vector space, and   
$X$ a smooth irreducible algebraic variety over $\kk$, unless specified otherwise.
 
\subsection{Reduction to characteristic $p$} 

Let $Y$ be any algebro-geometric structure defined over $\overline{\Bbb Q}$ which depends on finitely many parameters (e.g., variety, morphism of varieties, vector bundle, connection, etc.). Then $Y$ can be defined over a finitely generated subring $R\subset \overline{\Bbb Q}$. More precisely, there exists a form $Y_R$ of $Y$ defined over $R$ such that $Y=Y_R\otimes_R\overline{\Bbb Q}$. For every prime $p$, there are finitely many homomorphisms $\phi: R\to \overline{\Bbb F}_p$ (at least one for almost all $p$). Given such $\phi$, we can define the corresponding {\bf reduction of $Y$ at $p$}, ${\rm Red}_p(R,Y_R,\phi):=Y_R\otimes_{R,\phi} \overline{\Bbb F}_p$. { Of course, the reductions attached to two different choices of $(R,Y_R,\phi)$ may or may not be isomorphic.} But it is easy to show that for any two choices $(R',Y_{R'})$ and $(R'',Y_{R''})$ the collection of reductions corresponding to various $\phi$ is the same for almost all $p$. Thus it makes sense to say that reductions of $Y$ at almost all $p$ enjoy a certain property. 
 
\subsection{Frobenius twists and morphisms}   
Let $R,R'$ be commutative rings and $\phi: R\to R'$ be a homomorphism; 
it defines the corresponding map of spectra ${\rm Spec}\phi: {\rm Spec}R'\to {\rm Spec}R$. 
If $M$ is an $R$-module then we can define its pushforward
$\phi_*M:=R'\otimes_{R,\phi} M$, an $R'$-module. 
This defines the pullback functor $({\rm Spec}\phi)^*: {\rm QCoh}({\rm Spec}R)\to {\rm QCoh}({\rm Spec}R')$ on quasi-coherent sheaves. 

Let $R$ be a commutative ring of characteristic $p$ (i.e., an $\Bbb F_p$-algebra). 
Then $R$ has the {\bf absolute Frobenius endomorphism} ${\bf Fr}_R: R\to R$ 
given by ${\bf Fr}_R(a)=a^p$, $a\in R$. If $M$ is an $R$-module then its {\bf Frobenius twist} is the $R$-module $M^{(1)}:=({\bf Fr}_R)_*M$. In other words, speaking geometrically, if $S={\rm Spec}R$ is an affine scheme over $\Bbb F_p$ then we have the absolute Frobenius endomorphism of $S$, ${\bf Fr}_S={\rm Spec}{\bf Fr}_R: S\to S$, and if $M$ is a quasi-coherent sheaf on $S$ then its Frobenius twist is $M^{(1)}:={\bf Fr}_S^* M$.  

Now suppose that $X\to S$ is an affine scheme over $S$. Then the absolute Frobenius ${\bf Fr}_X: X\to X$ is in general not a morphism of $S$-schemes, since the corresponding map on regular functions is not $R$-linear but rather twisted $R$-linear (via ${\bf Fr}_R$).  To correct this, 
define the {\bf Frobenius twist} $X^{(1)}$ of $X$ to be the fiber product of $X$ with $S$ 
over $S$ twisted by ${\bf Fr}_S$; i.e., $\mathcal O(X^{(1)})=\mathcal O(X)^{(1)}$ as 
an $\mathcal O(S)$-algebra. Clearly, this is a functor on the category of $S$-schemes. 
Then the {\bf relative Frobenius morphism} is the natural map of $S$-schemes ${\rm Fr}_X: X\to X^{(1)}$. 

We will consider this setup in the case when $R=\kk$ is an algebraically closed field 
of characteristic $p$ and $S={\rm Spec}\kk$, so ${\bf Fr}_R={\bf Fr}_\kk$ is an automorphism. In this case, the Frobenius twist functor on $R$-modules (i.e., $\kk$-vector spaces) 
has a nice alternative description. Namely, for a $\kk$-vector space $V$, 
the space $V^{(1)}$ is the subspace of ${\rm Sym}^pV$ of elements $v^p$, $v\in V$.
It is clear that if $\lbrace v_i\rbrace$ is a basis of $V$ then $\lbrace v_i^p\rbrace$ is a basis of $V^{(1)}$, and that the assignment $V\mapsto V^{(1)}$ is 
an additive symmetric monoidal functor. Namely, its action on morphisms is as follows: 
for a linear map $T: V\to W$ with matrix $(t_{ij})$ 
in bases $\lbrace v_i\rbrace$ of $V$, $\lbrace w_j\rbrace$ of $W$, the matrix 
of $T^{(1)}$ in the bases $\lbrace v_i^p\rbrace$ of $V^{(1)}$, $\lbrace w_j^p\rbrace$ of $W^{(1)}$ is $(t_{ij}^p)$. Thus if $V=W$ are finite dimensional then the eigenvalues of $T^{(1)}$ are $\lambda_j^p$, where $\lambda_j$ are the eigenvalues of $T$.

Note that in this case the relative Frobenius morphism is the natural twisted-linear isomorphism $V\cong V^{(1)}$ which sends $v\in V$ to $v^p$. Thus we may alternatively define $V^{(1)}$ as the $\Bbb F_p$-space $V$ with twisted scalar multiplication given by $\lambda\cdot v:=\lambda^{1/p}v$, $\lambda\in \kk$. In this realization, $T^{(1)}$ 
is identified with $T$. 

Suppose now that $X$ is an affine $\kk$-scheme. 
Thus $X^{(1)}=X$ as an $\Bbb F_p$-scheme but with $\kk$-structure twisted by the map $\lambda\mapsto \lambda^{1/p}$. As explained above, the absolute Frobenius gives rise to the relative Frobenius morphism of $\kk$-schemes ${\rm Fr}_X: X\to X^{(1)}$, which on regular functions is defined by the formula $f\mapsto f^p$. Note that ${\rm Fr}_X$ is a homeomorphism in Zariski topology, so we may identify $X$ with $X^{(1)}$ as topological spaces using ${\rm Fr}_X$. 

Finally, note that by considering affine open covers, these definitions extend straightforwardly to not necessarily affine schemes.

\subsection{Isospectrality} 
Let $R$ be a finitely generated commutative $\kk$-algebra. Let $V$ be an $R$-module which is finite dimensional over $\kk$. Then $V$ has a finite composition series consisting of simple $R$-modules, which must be of the form $R/\mathfrak{m}\cong \kk$, where $\mathfrak{m}\subset R$ is a maximal ideal. Thus we obtain a function $\mu_V: {\rm Specm}R\to \Bbb Z_{\ge 0}$ with finite support such that $\mu_V(\mathfrak{m})$ is the multiplicity of $R/\mathfrak{m}$ in the Jordan-H\"older series of $V$. This function is called the {\bf spectral multiplicity} of $V$, and its support is called the {\bf spectrum of $V$}, denoted $\Sigma(V)$. If $\mu_V|_{\Sigma(V)}=1$, we say that 
$V$ has {\bf simple spectrum}. It is easy to see that if $V$ has simple spectrum then it is semisimple (a direct sum of simple modules). 

Two finite dimensional $R$-modules $V,W$ having the same spectral multiplicity are called {\bf isospectral}.
Let $V_s$ be the semisimplification of $V$, i.e., the direct sum of all simple composition factors of $V$. Thus $V,W$ are isospectral iff $V_s\cong W_s$. 

Now let $\bold U$ be a finite dimensional $\kk$-vector space and $R:={\rm Sym}\bold U$. 
Then an $R$-module is just a $\kk$-vector space $V$
equipped with a linear map $L: \bold U\to {\rm End}V$
such that 
$$[L(\bold u_1),L(\bold u_2)]=0$$
for all $\bold u_1,\bold u_2\in \bold U$. Thus 
if $\bold u=(u_1,...,u_r)$ in some basis of $\bold U$ 
then $L(\bold u)=\sum_{i=1}^r u_iL_i$, where 
$L_1,...,L_r\in {\rm End}V$ are commuting linear operators. 
We call such $L$ a {\bf pencil of commuting endomorphisms}, and say that 
$V$ corresponds to $L$, writing $V=V_L$. 
We say that two such pencils $L,M$ are {\bf isospectral} if 
so are the modules $V_L$ and $V_M$.  

\begin{lemma}\label{isospeccrit} 
$L,M$ are isospectral if and only if for every $\bold u\in \bold U$, the characteristic polynomials of the operators $L(\bold u)$ and $M(\bold u)$ (or, equivalently, their eigenvalues counting multiplicities) are the same. 
\end{lemma}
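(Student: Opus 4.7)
The plan is to translate both isospectrality and equality of characteristic polynomials into the same combinatorial data: the multiplicity function $\mu$. Since $R=\mathrm{Sym}\,U$, the set $\mathrm{Specm}\,R$ is canonically identified with the dual space $U^*$: every linear functional $\alpha:U\to \kk$ extends uniquely to an algebra homomorphism $\mathrm{Sym}\,U\to \kk$, whose kernel is the maximal ideal $\mathfrak{m}_\alpha$, and every maximal ideal arises in this way. Under this identification, on the simple module $R/\mathfrak{m}_\alpha\cong \kk$ the element $\bold u\in U$ acts as the scalar $\alpha(\bold u)$.

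The first substantive step is to deduce a product formula for the characteristic polynomial. Since the operators $L(\bold u)$ pairwise commute, I can refine any composition series of $V_L$ into a complete flag in $V_L$ that is simultaneously stable under every $L(\bold u)$; on each one-dimensional quotient, which is some $R/\mathfrak{m}_\alpha$, the operator $L(\bold u)$ is the scalar $\alpha(\bold u)$. This gives, for every $\bold u\in U$,
\[
\det(tI-L(\bold u)) \;=\; \prod_{\alpha\in\Sigma(V_L)}\bigl(t-\alpha(\bold u)\bigr)^{\mu_{V_L}(\alpha)},
\]
and analogously for $M$. The forward implication of the lemma is then immediate: if $\mu_{V_L}=\mu_{V_M}$, the two product expressions coincide at every $\bold u$.

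For the converse I assume the characteristic polynomials of $L(\bold u)$ and $M(\bold u)$ agree for all $\bold u\in U$ and want to recover $\mu_{V_L}=\mu_{V_M}$. Let $S:=\Sigma(V_L)\cup\Sigma(V_M)\subset U^*$, a finite set. For any distinct $\alpha,\beta\in S$ the locus $\{\bold u\in U:\alpha(\bold u)=\beta(\bold u)\}$ is a proper linear subspace of $U$. Because $\kk$ is algebraically closed, and hence infinite, $U$ is not a finite union of proper subspaces, so there exists $\bold u_0\in U$ at which the values $\alpha(\bold u_0)$, $\alpha\in S$, are pairwise distinct. Evaluating the two product formulas at such a separating $\bold u_0$ and reading off the multiplicity of each root forces $\mu_{V_L}(\alpha)=\mu_{V_M}(\alpha)$ for every $\alpha\in S$, proving isospectrality.

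The only step that uses more than a direct translation between the module-theoretic and spectral descriptions is the selection of a separating $\bold u_0$, and this is the one place where one invokes that $\kk$ is infinite. The parenthetical equivalence with \emph{eigenvalues counting multiplicities} is a purely formal restatement, since over an algebraically closed field the characteristic polynomial is determined by, and determines, the multiset of its roots.
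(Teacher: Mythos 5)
Your proof is correct and takes essentially the same approach as the paper: the forward direction is formal, and the converse is proved by choosing a functional $\bold u_0$ separating the finite set $\Sigma(V_L)\cup\Sigma(V_M)$ and reading multiplicities off the characteristic polynomial. You spell out a couple of steps the paper leaves implicit (the product formula coming from the $R$-module composition series, which is already a complete flag so no refinement is needed; and the justification for a separating $\bold u_0$, phrased via finite unions of proper subspaces rather than the paper's equivalent remark that a nonzero polynomial cannot vanish on all of $\kk^r$), but the argument is the same.
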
 

\begin{proof} The ``only if" part is obvious, so let us prove the ``if" part. Pick $\bold u\in \bold U$ which separates points of $\Sigma(V_L)\cup \Sigma(V_M)\subset \bold U^*$. This is possible, since any element of $R=\kk[u_1,...,u_r]$ vanishing on $\kk^r$ is zero. For any $\ell\in \Sigma(V_L)\cup \Sigma(V_M)$, $\mu_{V_L}(\ell)$, respectively $\mu_{V_M}(\ell)$, equals the multiplicity of the eigenvalue $\ell(\bold u)$ for the operator $L(\bold u)$, respectively $M(\bold u)$. Thus for such $\ell$, we have $\mu_{V_L}(\ell)=\mu_{V_M}(\ell)$, hence $\mu_{V_L}=\mu_{V_M}$, as desired. 
\end{proof}

\begin{example}\label{twoco} If ${\rm char}(\kk)=p$ and $L:=\sum_{i=1}^r u_iL_i: \bold U\to {\rm End}V$ is a pencil of commuting endomorphisms, then the pencil $L(\bold u)^p=\sum_{i=1}^r u_i^{(1)}L_i^p\in {\rm End}V$ parametrized by $\bold U^{(1)}$ (where $u_i^{(1)}:=u_i^p$) is isospectral to the pencil $L(\bold u)^{(1)}=\sum_{i=1}^r u_i^{(1)}L_i^{(1)}\in {\rm End} V^{(1)}$. Indeed, the eigenvalues of both are $\lambda_j(\bold u)^p$, where $\lambda_j(\bold u)$ are the eigenvalues of $L(\bold u)$. 
\end{example} 

\subsection{$p$-curvature} (\cite{K1}, Section 5) Let $\nabla$ be a flat connection on a vector bundle $\mathcal V$ over $X$. If $x_1,...,x_r$ are local coordinates on $X$ then we have commuting operators $\nabla_1,...,\nabla_r$ of covariant partial derivatives with respect to these coordinates, which act on the space $\Gamma_{\rm rat}(\mathcal V)$ of rational sections of $\mathcal V$ and determine $\nabla$.  

Suppose that ${\rm char}(\kk)=p>0$. Then the operators
$$
C_i=C_i(\nabla):=\nabla_i^p: \Gamma_{\rm rat}(\mathcal V)\to \Gamma_{\rm rat}(\mathcal V)
$$ 
are $\kk(X)$-linear and combine 
into a twisted 1-form 
$$
C=C(\nabla):=\sum_{i=1}^r C_i(\nabla)dx_i^{(1)}\in {\rm Fr}_X^*\Omega^1(X^{(1)})\otimes_{\mathcal O_X} {\rm End} \mathcal V,
$$
where ${\rm Fr}_X: X\to X^{(1)}$ is the relative Frobenius morphism and $x_i^{(1)}:=x_i^p$ are local coordinates on $X^{(1)}$. The form $C$ is independent on the choice of coordinates and is called the \linebreak {\bf $p$-curvature} of $\nabla$. Moreover, $[\nabla,C]=[C,C]=0$, i.e., $[\nabla_i, C_l] =[C_i,C_l]=0$ for all $i,l$.  Proofs of these and other properties of $p$-curvature can be found in \cite{K,K1}. 

\begin{example}\label{pcurfor} If $X=\Bbb A^1$ and $\nabla=d+a$, $a\in {\rm Mat}_N(\kk[x])$,  
then for $p=2$ we have $C=a^2+a'$ and for $p=3$ we have 
$C=a^3+[a',a]+a''$ (where we identify 1-forms with functions using the coordinate $x$).  
\end{example} 

An important application of $p$-curvature is the computation of the space of local flat sections of $\nabla$ (see e.g. \cite{K}). Let us recall this computation. For $\bold x\in X(\kk)$ let $\mathcal K_{\bold x}\subset \mathcal V_{\bold x}$ be the kernel of the $p$-curvature $C(\nabla)(\bold x)$. 
Assume that $\dim \mathcal K_{\bold x}=d$ for all $\bold x$
(this can be achieved by replacing $X$ by a dense open subvariety). 
Thus we have a vector bundle $\mathcal K$ on $X$ whose fibers are 
$\mathcal K_{\bold x}$. Let $\mathcal S$ be the sheaf of flat sections of $\nabla$. 

\begin{proposition}\label{solu} The sheaf $\mathcal S$ is a locally free 
coherent sheaf of rank $d$ over $X^{(1)}$, and $\mathcal K={\rm Fr}_X^*\mathcal S$. 
\end{proposition}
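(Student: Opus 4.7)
The plan is to cut the problem into two pieces: first, use the hypothesis $[\nabla,C]=0$ to recognize $\mathcal{K}$ as a subbundle stable under $\nabla$ on which the $p$-curvature vanishes identically; second, invoke Cartier descent to identify $\mathcal{K}$ with $\mathrm{Fr}^*\mathcal{S}$.

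The first step is essentially formal. Any local flat section $s$ satisfies $C_i s=\nabla_i^p s=0$, so $\mathcal{S}\subseteq \mathcal{K}$. In local coordinates one also sees that $\mathcal{S}$ is naturally a sheaf of $\mathcal{O}_{X^{(1)}}$-modules via the Frobenius inclusion $f\mapsto f^p$: indeed $\nabla_i(f^p s)=\partial_i(f^p)\,s+f^p\nabla_i s=0$ since $\partial_i(f^p)=pf^{p-1}\partial_i f=0$ in characteristic $p$. To see that $\mathcal{K}$ is $\nabla$-stable, take a local section $v\in \mathcal{K}$; then for every $j$, the commutation relation $[\nabla_i,C_j]=0$ gives $C_j(\nabla_i v)=\nabla_i(C_j v)=0$, hence $\nabla_i v\in \mathcal{K}$. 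The restriction $\nabla|_{\mathcal{K}}$ is therefore a flat connection on the rank-$d$ subbundle $\mathcal{K}$, and by construction its $p$-curvature is zero, because $(\nabla_i|_{\mathcal{K}})^p$ agrees with $C_i|_{\mathcal{K}}=0$.

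The second step is an appeal to Cartier descent (\cite{K1}, Theorem 5.1): for any vector bundle $\mathcal{E}$ on $X$ equipped with a flat connection of vanishing $p$-curvature, the sheaf $\mathcal{E}^{\nabla}$ of horizontal sections is a locally free $\mathcal{O}_{X^{(1)}}$-module of rank $\mathrm{rk}(\mathcal{E})$, and the canonical map $\mathrm{Fr}^*\mathcal{E}^\nabla\to \mathcal{E}$ is an isomorphism of vector bundles with connection. Applying this to $\mathcal{E}=\mathcal{K}$ with the connection $\nabla|_{\mathcal{K}}$, we obtain a locally free $\mathcal{O}_{X^{(1)}}$-sheaf $\mathcal{K}^{\nabla|_{\mathcal{K}}}$ of rank $d$ together with an isomorphism $\mathcal{K}\cong \mathrm{Fr}^*\mathcal{K}^{\nabla|_{\mathcal{K}}}$. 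Since $\mathcal{S}\subseteq \mathcal{K}$ consists precisely of the sections annihilated by $\nabla$, we have $\mathcal{S}=\mathcal{K}^{\nabla|_{\mathcal{K}}}$, which gives the desired conclusion.

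The main substantive input is Cartier descent itself, which is the nontrivial technical ingredient; its proof relies on the fact that $\mathcal{O}_X$ is locally free of rank $p^{\dim X}$ over $\mathcal{O}_{X^{(1)}}$ via Frobenius, so that one can compare the algebra generated by the $\nabla_i$ with a matrix algebra over $\mathcal{O}_{X^{(1)}}$ and solve the flat-section equation in sufficient abundance. Once this classical fact is granted, the remaining verifications in the plan are essentially formal manipulations with commutators and the Leibniz rule, so I do not expect any additional obstacle.
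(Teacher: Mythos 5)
Your proof is correct and follows the same strategy as the paper: reduce to the subbundle $\mathcal K$, observe that $\nabla$ restricts to a flat connection with vanishing $p$-curvature there, and conclude via Cartier descent. The only difference is that where you cite Cartier descent as a black box, the paper supplies the key step inline by exhibiting the explicit flat section $F=(-1)^r\prod_{i=1}^r\nabla_i^{p-1}\bigl(F_0\prod_{i=1}^r x_i^{p-1}\bigr)$ through an arbitrary fiber vector $F_0$, making the argument self-contained.
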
 

\begin{proof} Since $\nabla$ commutes 
with $C$, it defines a flat connection on $\mathcal K$. Moreover, if  
$F$ is a flat section of $\nabla$, i.e., $\nabla_iF=0$ 
for all $i$, then $C_iF=\nabla_i^pF=0$, so 
$F$ is a section of $\mathcal K$. 
Thus, restricting attention to $\mathcal K$, we 
may assume without loss of generality that $C=0$.

Let $\bold x\in X(\kk)$. On some affine neighborhood $U$ of $\bold x$ we can choose local coordinates $x_1,...,x_r$ near $\bold x$ and identify $\mathcal V$ with the trivial bundle $\kk^N\times U\to U$. Our job is to show that  
for any $F_0\in \kk^N$ there exists a flat section 
$F(x_1,...,x_r)$ of $\nabla$ such that $F(0)=F_0$. 
But one such section is given by the formula
$$
F=(-1)^r \prod_{i=1}^r
\nabla_i^{p-1} (F_0\prod_{i=1}^r x_i^{p-1}),
$$
which completes the proof.\footnote{Recall that the Frobenius pullback of any vector bundle (in particular, ${\rm Fr}_X^*\mathcal S$) has a canonical flat connection with zero $p$-curvature. The isomorphism ${\rm Fr}_X^*\mathcal S\cong \mathcal K$ maps this connection to $\nabla$.}
\end{proof} 
 
More concretely, Proposition \ref{solu} implies that if $\mathcal K_{\bold y}$ 
has constant dimension $d$ for $\bold y$ in a neighborhood of $\bold x\in X(\kk)$
then there exists an affine neighborhood $U$ of $\bold x$ and 
flat sections $F_1,...,F_d$ of $\nabla$ over $U$ 
such that $F_1(\bold y),...,F_d(\bold y)$ is a basis 
of $\mathcal K_{\bold y}$ for all $\bold y\in U(\kk)$, and 
$F_1,...,F_d$ is a basis of $\mathcal S(U)$ over $\mathcal O(U^{(1)})$. 
 
\begin{remark} Suppose $\nabla$ is a rational connection on $\Bbb A^1$ defined over $\overline{\Bbb Q}$. If $\nabla$ has a basis of algebraic solutions (which for regular connections is equivalent to having finite monodromy), then by Proposition \ref{solu} the $p$-curvature of the reduction of $\nabla$ to characteristic $p$ is zero for almost all $p$, since the algebraic solutions can be reduced modulo large $p$. The converse statement is known as the {\bf Grothendieck-Katz conjecture}, which first appeared in \cite{K} and is still open. 
\end{remark} 
 
\section{Periodic pencils of flat connections and their $p$-curvature}

\subsection{Periodic families and pencils of flat connections} 
In \cite{EV} we defined periodic families of flat connections. 
Let us recall this definition.  

\begin{definition} An {\bf $n$-parameter family of flat connections} on $X$ with values in $V$ is a family of flat connections $\nabla(\bold s)=d-B(\bold s)$, 
$\bold s:=(s_1,...,s_n)$, on the trivial vector bundle\footnote{ As explained in \cite{EV}, Remark 3.2, the notion of a pencil is not gauge-invariant, so it only makes sense with respect to a fixed trivialization of the vector bundle, and is not well defined for a general vector bundle. The notion that {\it is} gauge invariant and makes sense in general is that of an {\bf affine pencil}, i.e., a family of flat connections of the form $\nabla_0-s_1B_1-...-s_nB_n$ where $\nabla_0$ is a fixed connection, which in a trivialization looks like $d-B$ where $B=B_0+s_1B_1+...+s_nB_n$, an {\it affine} linear function of $s_i$.}
 $X\times V\to X$, 
where $B\in \Omega^1(X)\otimes {\rm End V}[\bold s]$. The family $\nabla$ 
is said to be a {\bf pencil} if $B=\sum_{j=1}^n s_jB_j$, $B_j\in \Omega^1(X)\otimes {\rm End}V$. 
\end{definition} 

Thus a pencil of flat connections is determined by a collection   
of 1-forms $B_j$ on $X$ with values in ${\rm End} V$
such that $dB_j=[B_j,B_k]=0,\ 1\le j,k\le n$ (these conditions are vacuous if $\dim X=1$). 

\begin{definition} A family $\nabla$ is said to be {\bf periodic} if 
there exist {\bf shift operators} 
$$
A_j\in GL(V)(\kk(\bold s)[X]),\ 1\le j\le n
$$
 such that 
$$
\nabla(\bold s+\bold e_j)\circ A_j(\bold s)=A_j(\bold s)\circ \nabla(\bold s),\ 1\le j\le n,
$$
where $\lbrace\bold e_j\rbrace$ is the standard basis of $\kk^n$. 
\end{definition} 

Suppose ${\rm char}(\kk)=0$ and $\nabla$ is a periodic family over $\kk$. In this case we can choose forms of $X$, $\nabla$ and $A$ over some finitely generated subring $S\subset \kk$. Then for any homomorphism $\phi: S\to \Bbb F$ from $S$ to a field $\Bbb F$ of characteristic $p$, the family $\nabla\otimes_S\Bbb F$ obtained from $\nabla$ by reduction to $\Bbb F$ via $\phi$ is a periodic family over $\Bbb F$. Thus every example of a periodic family in characteristic zero gives a periodic family
 in almost every positive characteristic.  

\subsection{$p$-curvature of periodic pencils} 
Now assume that ${\rm char}(\kk)=p$. Let $x_i,1\le i\le r$ be local coordinates on 
some dense open subset of $X$, $x_i^{(1)}:=x_i^p$ the corresponding coordinates on $X^{(1)}$, and $\partial_i:=\frac{\partial}{\partial x_i^{(1)}}$ the associated commuting vector fields on $X^{(1)}$. Let $\bold U\cong \kk^r$ be the $\kk$-vector space spanned by $\partial_i$, $1\le i\le r$.  Let $\nabla(\bold s)=d-\sum_{j=1}^n s_jB_j$ be a pencil of flat connections on $X$. Then for each $\bold s=(s_1,...,s_n)\in \kk^n$ 
we have two pencils of commuting endomorphisms: the $p$-curvature 
$$
C(\nabla(\bold s)): \bold U\to {\rm End} V
$$ 
and 
$$
\bold B(\nabla(\bold s)):=\sum_{j=1}^n (s_j-s_j^p)B_j^{(1)}=\sum_{i=1}^r\sum_{j=1}^n (s_j-s_j^p)B_{ij}^{(1)}dx_i^{(1)}: \bold U\to {\rm End} V^{(1)}.
$$
Our main result is the following theorem.

\begin{theorem}  \label{main1} If $\nabla(\bold s)$ is a periodic pencil then the pencils $C(\nabla(\bold s))$ and $\bold B(\nabla(\bold s))$ of commuting endomorphisms are 
isospectral for all $\bold s\in \kk^n$. In particular,
if $s_j\in \Bbb F_p$ for all $j$ then the endomorphisms $C_i(\nabla(\bold s))$ are nilpotent. 
Moreover, if for some $\bold s$, $\bold B(\nabla(\bold s))$ has simple spectrum (in the sense that $\bold B(\nabla(\bold s))(\bold x)$ has simple spectrum for generic $\bold x\in X$), then 
$C(\nabla(\bold s))$ and $\bold B(\nabla(\bold s))$ are conjugate.  
\end{theorem}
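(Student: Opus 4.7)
My plan proceeds in three stages.

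First, I would transport the periodicity of $\nabla$ to a symmetry of the characteristic polynomial of $C_i$. Since the $p$-curvature is gauge-covariant, the shift-operator identity $\nabla(\bold s+\bold e_j)\circ A_j(\bold s) = A_j(\bold s)\circ\nabla(\bold s)$ immediately yields $C_i(\nabla(\bold s+\bold e_j)) = A_j(\bold s)\,C_i(\nabla(\bold s))\,A_j(\bold s)^{-1}$. Consequently, every coefficient of $\det(\lambda I - C_i(\nabla(\bold s)))$, regarded as an element of $\kk(X)[\bold s][\lambda]$, is invariant under each substitution $s_j\mapsto s_j+1$ as a polynomial identity in $\bold s$. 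This is the key symmetry that periodicity provides at the level of spectra.

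Second, I would combine this shift-invariance with a polynomial-degree bound via Artin-Schreier. Expanding $\nabla_i^p = (\partial_i - \sum_j s_j B_{ij})^p$ as a sum of length-$p$ words in the alphabet $\{\partial_i,\,-\sum_j s_j B_{ij}\}$ shows that the matrix entries of $C_i$ are polynomials in $\bold s$ of total degree at most $p$, so the coefficient of $\lambda^{N-k}$ in the characteristic polynomial is a polynomial in $\bold s$ of total degree at most $pk$. In characteristic $p$, a polynomial invariant under $s_j\mapsto s_j+1$ lies in $\kk[s_j^p-s_j]$; applying this in each variable, the coefficient of $\lambda^{N-k}$ is actually a polynomial in $\bold t := (s_j^p - s_j)_j$ of total degree at most $k$. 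The characteristic polynomial of $\bold B_i(\bold s) = -\sum_j t_j B_{ij}^{(1)}$, being linear in $\bold t$, has the identical degree profile.

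Third, I would identify these two polynomials in $\bold t$. The top-in-$\bold s$ component of $C_i$ is $(-1)^p(\sum_j s_j B_{ij})^p$, since every other word in the expansion contains at least one $\partial_i$ and thus has strictly lower $\bold s$-degree. Under the Frobenius-twist identification (eigenvalues of $T^{(1)}$ being $p$-th powers of those of $T$), the spectrum of this top component coincides with that of $-\sum_j s_j^p B_{ij}^{(1)}$, the top-in-$\bold s$ part of $\bold B_i$; hence the top-in-$\bold t$ parts of $\chi_{C_i}$ and $\chi_{\bold B_i}$ agree. I would then pin down the lower-in-$\bold t$ coefficients using the shift-invariance constraint together with the degree profile, concluding isospectrality via Lemma \ref{isospeccrit}. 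The final clause about conjugation under simple spectrum follows because isospectrality plus simple spectrum of $\bold B_i$ at a generic point forces simple spectrum of $C_i$ there, after which matching the simultaneous eigenspace decompositions (both being semisimple and commutative) produces the conjugation.

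The hard part will be the last matching step: controlling the noncommutative Jacobson-type subleading corrections to $(\sum_j s_j B_{ij})^p$ is delicate, and even with the degree profile in hand, a polynomial in $\bold t$ is not determined by its leading coefficients alone. A cleaner route may be to first handle the simple-spectrum case by direct analysis of flat sections (Proposition \ref{solu}) together with the iterated shift operator $A_j(\bold s+(p-1)\bold e_j)\cdots A_j(\bold s)$, which commutes with $\nabla(\bold s)$ in characteristic $p$ and whose eigenvalues should encode those of $\bold B_i$; one then deduces the general isospectrality by a specialization/continuity argument within the parameter family $\bold s$.
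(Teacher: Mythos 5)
Your stages 1 and 2 reproduce the paper's argument exactly: gauge-covariance of the $p$-curvature gives $C_i(\nabla(\bold s+\bold e_j))=A_j(\bold s)C_i(\nabla(\bold s))A_j(\bold s)^{-1}$, hence shift-invariance of each coefficient $b_m={\rm Tr}\wedge^m C$ of the characteristic polynomial; the degree bound $\deg_{\bold s}b_m\le pm$ plus the Artin--Schreier fact $\kk[s]^{\Bbb Z/p}=\kk[s^p-s]$ forces $b_m(\bold s)=\beta_m(\bold s-\bold s^p)$ with $\deg\beta_m\le m$; and the leading term of $\beta_m$ is identified via $(\sum_j s_jB_{ij})^p\sim\sum_j s_j^pB_{ij}^{(1)}$. (One small omission: by Lemma \ref{isospeccrit} you must run this for every linear combination $\sum_i u_iC_i$, not just the individual $C_i$, but your argument applies verbatim to these.) However, the step you flag as ``the hard part'' --- showing that $\beta_m$ has no terms of degree $<m$ in $\bold t$ --- is a genuine gap in your write-up, and you are right that shift-invariance and the degree profile alone do not close it.

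The missing idea is a \emph{lower} degree bound coming from the trivial observation that $C(\bold u,0)=0$. This puts every matrix entry of $C(\bold u,\bold s)$ in the ideal $(s_1,\dots,s_n)$, hence ${\rm Tr}\wedge^m C(\bold u,\bold s)$ lies in $(s_1,\dots,s_n)^m$ and contains no monomial of $\bold s$-degree $<m$. Now if $k<m$ were the smallest degree with $\beta_m^{(k)}\ne 0$ (writing $\beta_m=\sum_k\beta_m^{(k)}$ in homogeneous components in $\bold t$), then since the lowest-order part of $t_j=s_j^p-s_j$ is $-s_j$, the degree-$k$ part in $\bold s$ of $\beta_m(\bold s-\bold s^p)$ would be $\pm\beta_m^{(k)}(\bold s)\ne 0$, contradicting the previous sentence. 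Hence $\beta_m=\beta_m^{(m)}$ is its own leading term, which you have already matched with ${\rm Tr}\wedge^m\bold B(\bold u,\bold s)$. With this one line your argument is complete and coincides with the paper's proof. Your proposed fallback route (iterated shift operators $A_j(\bold s+(p-1)\bold e_j)\cdots A_j(\bold s)$ acting on flat sections, then specialization) is left entirely undeveloped and is not needed; I would discard it in favor of the vanishing-at-$\bold s=0$ argument above.
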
 

\begin{proof} For $u_1,...,u_r\in \kk$ let 
$$
C(\bold u,\bold s):=\sum_{i=1}^r u_iC_i(\nabla(\bold s)),\ 
\bold B(\bold u,\bold s):=\sum_{i=1}^r\sum_{j=1}^n u_i(s_j-s_j^p)B_{ij}^{(1)}.
$$ 
By Lemma \ref{isospeccrit}, it suffices to show that for all $\bold u,\bold s$ the characteristic polynomials of the matrices
$C(\bold u,\bold s)$ and $\bold B(\bold u,\bold s)$ coincide. 

Fix $\bold u$ and let 
$$
b_m(\bold u,\bold s):={\rm Tr}\wedge^mC(\bold u,\bold s)
$$ 
be the $m$-th coefficient of the characteristic polynomial of $C(\bold u,\bold s)$ multiplied by $(-1)^m$. This is a polynomial of $\bold s$ of degree $pm$. The periodicity property of $\nabla$ implies that 
\begin{equation}\label{conju}
C(\nabla(\bold s+\bold e_j))\circ A_j(\bold s)=A_j(\bold s)\circ C(\nabla(\bold s)),
\end{equation} 
hence 
$b_m(\bold u,\bold s+\bold e_j)=b_m(\bold u,\bold s)$ 
for all $j$. Thus 
$$
b_m(\bold u,\bold s)=\beta_m(\bold u,\bold s-\bold s^p),
$$ 
where $\bold s^p:=(s_1^p,...,s_n^p)$ and 
$\beta_m$ is a polynomial of degree $m$ in the second variable.
 
Moreover, it is easy to see that the leading term of $b_m(\bold u,\bold s)$ in $\bold s$ is 
 $$
b_m^0(\bold u,\bold s)=(-1)^m{\rm Tr}\wedge^m\sum_{i=1}^r u_i(\sum_{j=1}^n s_jB_{ij})^p.
$$ 
Indeed, the $p$-curvature has the form 
$$
C(\bold u,\bold s)=-\sum_{i=1}^r u_i (\sum_{j=1}^n s_jB_{ij})^p+{\rm l.d.t.}
$$
where l.d.t. are the lower degree terms with respect to $\bold s$. 
In view of Example \ref{twoco}, we then have
$$
b_m^0(\bold u,\bold s)=(-1)^m{\rm Tr}\wedge^m\sum_{i=1}^r u_i\sum_{j=1}^n s_j^pB_{ij}^{(1)}.
$$ 
Hence the leading term $\beta_m^0$ of $\beta_m$ is 
$$
\beta_m^0(\bold u,\bold s)={\rm Tr}\wedge^m\sum_{i=1}^r\sum_{j=1}^n u_is_jB_{ij}^{(1)},
$$
so 
$$
\beta_m^0(\bold u,\bold s-\bold s^p)={\rm Tr}\wedge^m \bold B(\bold u,\bold s).
$$
Thus it remains to show that $\beta_m=\beta_m^0$, i.e., $\beta_m$
is homogeneous of degree $m$ in $\bold s$. To this end it suffices to show that the polynomial $\beta_m(\bold u,\bold s-\bold s^p)$ 
does not contain monomials of degree $<m$ in $\bold s$. But $C(\bold u,0)=0$, so 
all matrix coefficients of $C(\bold u,\bold s)$ belong to the ideal $(s_1,...,s_n)$. 
This implies that ${\rm Tr}\wedge^mC(\bold u,\bold s)$ does not contain monomials of degree $<m$, as desired. 
\end{proof} 

Thus the joint eigenvalues $\Lambda_{i\ell}$ of the operators $C_i(\nabla(\bold s))$ coincide with the joint eigenvalues of the operators $\bold B_i(\bold s):=\sum_{j=1}^n(s_j-s_j^p)B_{ij}^{(1)}$.  
If $n=1$ and $s_1=s$ then we have $\Lambda_{i\ell}=(s-s^p)\Lambda_{i\ell}^0$, where $\Lambda_{i\ell}^0$ are the joint eigenvalues of the operators $B_i^{(1)}$ which don't depend on $s$. However, for $n\ge 2$ these eigenvalues are more complicated algebraic (in general, irrational) functions of $\bold s$, since $B_{ij}$ for different $j$ in general don't commute. 

\begin{remark} If $\bold s\in \Bbb F_p^n$ then $\bold B(\nabla(\bold s))=0$ but the nilpotent 1-forms $C(\nabla(\bold s))$ need not be zero (except, of course, for $\bold s=0$). However, many of these forms are often 
conjugate to each other. For example, consider the case $n=1$,  
and let $\alpha_1,...,\alpha_\ell\in \Bbb F_p$ be the poles of $A,A^{-1}$ 
which belong to $\Bbb F_p$, arranged in ``increasing order", i.e., when $\alpha_j$ are regarded as elements of $[0,p-1]$, we have $0\le \alpha_1<\alpha_2<...<\alpha_\ell\le p-1$. Then $\Bbb F_p$ is divided into intervals 
$I_j:=[\alpha_j+1,\alpha_{j+1}]$, $j\in \Bbb Z/\ell$ (here the interval $I_0$ consists 
of such $m\in [0,p-1]$ that $\alpha_\ell+1\le m\le p-1$ or $0\le m\le \alpha_1$). Equation \eqref{conju} now implies 

\begin{proposition} For every $j\in \Bbb Z/\ell$, the forms $C(\nabla(m))$, $m\in I_j$ are conjugate to each other. In particular, if $m\in I_0$ then $C(\nabla(m))=C(\nabla(0))=0$.  
\end{proposition}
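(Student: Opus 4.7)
The plan is direct: use Equation \eqref{conju}, specialized to $n=1$, as a chain of conjugations connecting $C(\nabla(m))$ for various $m\in\Bbb F_p$. The equation reads
\[
C(\nabla(s+1))\circ A(s)=A(s)\circ C(\nabla(s)),
\]
so whenever $s=m\in\Bbb F_p$ lies outside the pole set $\{a_1,\dots,a_\ell\}$ of $A^{\pm 1}$, the matrix $A(m)\in GL(V)(\kk(X))$ is a well-defined invertible rational gauge transformation conjugating $C(\nabla(m))$ to $C(\nabla(m+1))$ (as twisted 1-forms, componentwise). Since $B(\bold s)$ is polynomial in $\bold s$, in characteristic $p$ the connection $\nabla(s)$ depends only on $s\bmod p$, so this conjugation relation descends to and iterates cyclically on $\Bbb F_p$.

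First I would fix $j$ and chain the shifts along $I_j=[a_j+1,a_{j+1}]$. By construction of the $a_k$, every element $m\in I_j\setminus\{a_{j+1}\}$ avoids the pole set, so successive applications of $A(a_j+1),A(a_j+2),\dots,A(a_{j+1}-1)$ conjugate $C(\nabla(a_j+1))$ successively to $C(\nabla(m))$ for each $m\in I_j$. For $j=0$ one interprets this chain cyclically: if $a_\ell<p-1$ it runs through $a_\ell+1,\dots,p-2,p-1,0,1,\dots,a_1-1$, using the identification $\nabla(p)=\nabla(0)$ to close the loop; if $a_\ell=p-1$ then $I_0=[0,a_1]$ and no wrap-around is needed. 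In either case none of the shift points coincide with a pole, so the chain of conjugations is legitimate and the first assertion follows.

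For the special case $m\in I_0$, it remains to show $C(\nabla(0))=0$. Since $\nabla(0)=d$, in local coordinates $C_i(\nabla(0))=\partial_i^p$. Using that $\partial_i^p$ is an additive derivation on $\kk(X)$ in characteristic $p$, together with the vanishing $\partial_i^p x_k^n=n(n-1)\cdots(n-p+1)x_k^{n-p}\equiv 0\pmod p$ on monomial generators (one of the $p$ consecutive factors is divisible by $p$), one concludes $\partial_i^p=0$ on all of $\kk(X)$, hence $C(\nabla(0))=0$.

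There is no substantive obstacle: the only point requiring minor care is the wrap-around in $I_0$, which splits into the two cases $a_\ell=p-1$ and $a_\ell<p-1$ handled above. The argument is otherwise a straightforward iteration of \eqref{conju}.
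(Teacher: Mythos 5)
Your proof is correct and is exactly the argument the paper intends: the paper derives the Proposition directly from Equation \eqref{conju} without writing out the chaining, and you have filled in precisely the right details (specializing $A(s)$ at non-pole values $m$, iterating along $I_j$, handling the cyclic wrap-around via $\nabla(p)=\nabla(0)$, and verifying $C(\nabla(0))=0$ since $\partial_i^p=0$ in characteristic $p$). No gaps.
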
  
\end{remark}  

\subsection{Infinitesimally periodic and mixed-periodic families} 

By replacing $\bold s$ with $\bold s/t$ in the definition of a periodic family 
of flat connections and sending $t$ to $0$, we obtain the definition 
of an infinitesimally periodic family. 

\begin{definition} A family of connections $\nabla(\bold s)=d-B(\bold s)$ on the trivial bundle on $X$ with fiber $V$ is {\bf infinitesimally periodic} if there exist {\bf infinitesimal shift operators} 
$$
a_j\in {\rm End}V\otimes \kk(\bold s)[X],\ 1\le j\le n
$$
such that for all $i,j$, 
$$
[\partial_{s_j}-a_j(\bold s,\bold x),\nabla_i(\bold s)]=0,
$$
i.e., $\widetilde \nabla(\bold s):=d-B(\bold s)-\sum_{j=1}^n a_j(\bold s)ds_j$ 
is a connection on the trivial bundle on $X\times \kk^n$ with fiber $V$ 
whose curvature is of the form\footnote{In fact, in all the examples we will consider this curvature vanishes. Moreover, if ${\rm char}(\kk)=0$ and $\nabla$ has no nontrivial endomorphisms over $\kk(\bold s)$, then it is easy to see that $\eta$ is a scalar-valued rational closed $2$-form independent on $\bold x$.} $\eta=\sum_{1\le i<j\le n}\eta_{ij}(\bold x,\bold s)ds_i\wedge ds_j$. 
\end{definition} 

\begin{lemma}\label{infper} Let ${\rm char}(\kk)=p$. Assume that $\nabla$ is an infinitesimally periodic family and $a_j$ are regular at $\bold s=0$. Then the matrix coefficients of the $p$-curvature operators $C_i(\bold s)$ of $\nabla(\bold s)$ belong to the ideal 
in $\kk[\bold s][X]$ generated by $s_1^p,...,s_n^p$. 
\end{lemma}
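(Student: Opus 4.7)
My plan is to convert the infinitesimal-periodicity identity into a linear first-order ODE for $C_i$ in the parameters $\bold s$, and then integrate this ODE modulo the ideal $(s_1^p,\ldots,s_n^p)$. First, since $C_i=\nabla_i^p$ is a polynomial in $\nabla_i$, any operator commuting with $\nabla_i$ also commutes with $C_i$; applied to $D_j:=\partial_{s_j}-a_j$, the hypothesis $[D_j,\nabla_i]=0$ yields
$$
\partial_{s_j}C_i=[a_j,C_i],\qquad 1\le j\le n.
$$
This is the infinitesimal counterpart of the intertwining relation \eqref{conju} used in the proof of Theorem~\ref{main1}: whereas the integer-shift version forced the characteristic polynomial of $C_i$ to be invariant under $\bold s\mapsto\bold s+\bold e_j$, the present infinitesimal version will constrain the Taylor coefficients of $C_i$ at $\bold s=\bold 0$.

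Next, using regularity of $a_j$ at $\bold s=\bold 0$, I would Taylor-expand $C_i=\sum_\alpha c_{i,\alpha}(\bold x)\,\bold s^\alpha$ and $a_j=\sum_\alpha a_{j,\alpha}(\bold x)\,\bold s^\alpha$, and match the coefficient of $\bold s^\mu$ on both sides of the displayed ODE to obtain the recursion
$$
(\mu_j+1)\,c_{i,\mu+\bold e_j}=\sum_{\beta+\gamma=\mu}[a_{j,\beta},c_{i,\gamma}].
$$
Granting the base case $c_{i,\bold 0}=C_i(\bold 0,\bold x)=0$ (discussed below), I would induct on $|\alpha|$ to show $c_{i,\alpha}=0$ for every $\alpha\in[0,p-1]^n$: to reach any such $\alpha\ne\bold 0$, pick $j$ with $\alpha_j\ge 1$ and apply the recursion with $\mu=\alpha-\bold e_j$. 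The scalar $\mu_j+1=\alpha_j$ then lies in $\{1,\ldots,p-1\}$ and is thus a unit in $\kk$, while the right-hand side vanishes by the inductive hypothesis (every $c_{i,\gamma}$ occurring there has $|\gamma|\le|\mu|<|\alpha|$). This says precisely that the image of $C_i$ in $\bigl(\kk[\bold s]/(s_1^p,\ldots,s_n^p)\bigr)\otimes\kk[X]\otimes{\rm End}\,V$ is zero, i.e.\ $C_i\in(s_1^p,\ldots,s_n^p)$ as claimed.

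The main obstacle is supplying the base case $C_i(\bold 0,\bold x)=0$. A clean way to package it is via a formal gauge transformation: solve $\partial_{s_j}G=a_j G$ with $G(\bold 0,\bold x)=I$ (compatibility is automatic once the curvature $2$-form $\eta$ of $\widetilde\nabla$ vanishes, which the footnote notes covers all examples of interest). A short calculation using the identity $\partial_{s_j}B_i=\partial_{x_i}a_j+[a_j,B_i]$ extracted from $[\partial_{s_j}-a_j,\nabla_i]=0$ shows that $G^{-1}\nabla_i G=\partial_{x_i}-B_i(\bold 0,\bold x)$ is $\bold s$-independent; taking $p$-th powers gives $C_i(\bold s)=G(\bold s)\,C_i(\bold 0,\bold x)\,G(\bold s)^{-1}$, reducing the lemma to the vanishing of $C_i(\bold 0,\bold x)$. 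The latter is the normalization $\nabla(\bold 0)=d$, which is automatic for pencils and hence inherited by their confluent limits, i.e.\ by the motivating examples of infinitesimally periodic families.
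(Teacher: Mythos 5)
Your primary argument is correct and is essentially the paper's proof: both start by deriving $\partial_{s_j}C_i=[a_j,C_i]$ from $[\partial_{s_j}-a_j,\nabla_i]=0$, Taylor-expand $C_i$ and $a_j$ in $\bold s$ at $\bold 0$ (using regularity of $a_j$ there), and then exploit that $\partial_{s_j}$ lowers the $s_j$-exponent by one while $[a_j,\cdot]$ cannot. The paper phrases this via ``minimal monomials'' of $C_i$ (showing all their exponents must be divisible by $p$, and that $\bold 0$ is excluded since $C_i(\bold 0,\bold x)=0$), while you run an explicit induction on $|\alpha|$ over $\alpha\in[0,p-1]^n$; these are the same argument in different packaging. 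You also usefully make explicit the normalization $C_i(\bold 0,\bold x)=0$, which the paper asserts without comment and which indeed comes from $\nabla(\bold 0)=d$ for pencils and their confluent limits.

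The proposed gauge-transformation packaging of the base case, however, does not work in characteristic $p$, and this is worth flagging because it touches exactly the phenomenon the lemma is about. The system $\partial_{s_j}G=a_jG$, $G(\bold 0)=I$ is generically \emph{not} solvable in $\mathrm{End}\,V\otimes\kk(X)[[\bold s]]$: the recursion for the Taylor coefficients of $G$ requires dividing by $\alpha_j$, which fails when $p\mid\alpha_j$, and the obstruction met at exponent $p$ is precisely the $p$-curvature of the auxiliary connection $\sum_j(\partial_{s_j}-a_j)\,ds_j$ on $\kk^n$. Vanishing of the curvature $\eta$ guarantees formal integrability only in characteristic zero; in characteristic $p$ it is necessary but not sufficient. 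In fact, if such a $G$ did exist, your identity $C_i(\bold s)=G\,C_i(\bold 0,\bold x)\,G^{-1}$ together with $C_i(\bold 0,\bold x)=0$ would force $C_i(\bold s)\equiv 0$, which is strictly stronger than the lemma and false in general (the whole point of the $p$-curvature computations in the paper is that $C_i$ is not zero). Nor does the truncation of $G$ modulo $(s_1^p,\dots,s_n^p)$ rescue the argument: the truncated $G$ fails to satisfy $\partial_{s_j}G\equiv a_jG$ in the top $s_j$-degree $p-1$, so $G^{-1}\nabla_iG$ is not $\bold s$-independent even modulo the ideal. So the gauge argument should be dropped; the direct recursion already proves the lemma cleanly.
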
  

\begin{proof} Since $[\partial_{s_j}-a_j(\bold s,\bold x),\nabla_i(\bold s)]=0$, we have  
\begin{equation} \label{eqqq} 
[\partial_{s_j}-a_j(\bold s,\bold x),C_i(\bold s,\bold x)]=0.
\end{equation}
Let $s_1^{i_1}...s_n^{i_n}$ be a minimal monomial occurring in the expansion of $C_i$ 
(i.e., the monomials obtained from this monomial by lowering the powers don't occur). 
Then \eqref{eqqq} implies that all $i_k$ must be divisible by $p$. 
Also $i_k$ cannot all be zero, as $C_i(0,\bold x)=0$. Thus all monomials occuring in $C_i$ 
are divisible by $s_j^p$ for some $j$. The lemma follows. 
\end{proof} 

More generally, one may consider {\bf mixed-periodic families} $\nabla(\bold s)$, 
$\bold s=(\bold s_1,\bold s_2)$, $\bold s_1\in \kk^{n_1}$, $\bold s_2\in \kk^{n_2}$, 
$n_1+n_2=n$, which are periodic in $\bold s_1$ for fixed $\bold s_2$ and infinitesimally periodic in $\bold s_2$ for fixed $\bold s_1$ (where $A_i$ and $a_j$ are defined over $\kk(\bold s)$). Interesting examples 
of such families arise as limits of periodic families when some of the parameters $s_j$ are replaced by $s_j/t$ and $t$ is sent to $0$, see Section 4 below. 

The following generalization of Theorem \ref{main1} to the case of mixed-periodic families is obtained by a straighforward generalization (or, more precisely, degeneration) of its proof. 
Let 
$$
\bold B(\nabla(\bold s)):=\sum_{j=1}^{n_1} (s_j-s_j^p)B_j^{(1)}-\sum_{j=n_1+1}^{n} s_j^pB_j^{(1)}.
$$

\begin{theorem}\label{main3} Let $\nabla(\bold s)$, $\bold s=(\bold s_1,\bold s_2)\in \kk^n$ be a mixed-periodic pencil on a smooth variety\footnote{We note that while Theorem \ref{main3} is formulated on a smooth variety $X$, 
it extends with the same proof to the case when $X$ is a formal polydisk, or a more general smooth formal scheme.} $X$ (periodic in $\bold s_1\in \kk^{n_1}$ and infinitesimally periodic in $\bold s_2\in \kk^{n_2}$). Assume that $a_j(\bold s_1,\bold s_2)$ are regular in $\bold s_1$ at $\bold s_1=0$ and $a_j(0,\bold s_2)$ are regular in $\bold s_2$ at $\bold s_2=0$. Then the pencils $C(\nabla(\bold s))$ and $\bold B(\nabla(\bold s))$ of commuting endomorphisms are isospectral for all $\bold s$. In particular, if $s_j\in \Bbb F_p$ for all $1\le j\le n_1$ and $s_j=0$ for $n_1+1\le j\le n$ then the endomorphisms $C_i(\nabla(\bold s_1,\bold s_2))$ are nilpotent. Moreover, if for some $\bold s$, $\bold B(\nabla(\bold s))$ has simple spectrum, then $C(\nabla(\bold s))$ and $\bold B(\nabla(\bold s))$ are conjugate. 
\end{theorem}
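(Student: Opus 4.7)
The plan is to mimic the proof of Theorem \ref{main1}, with the infinitesimal periodicity in $\bold s_2$ degenerating genuine periodicity at the level of characteristic polynomial coefficients. Set $b_m(\bold u,\bold s):=\Tr\wedge^m C(\bold u,\bold s)$ where $C(\bold u,\bold s)=\sum_i u_i C_i(\nabla(\bold s))$; by Lemma \ref{isospeccrit} it suffices to show $b_m=\Tr\wedge^m\bold B(\bold u,\bold s)$ as a polynomial identity. For $j\le n_1$, conjugation by the shift operators $A_j$ gives $C(\nabla(\bold s+\bold e_j))A_j=A_jC(\nabla(\bold s))$, hence $b_m(\bold s_1+\bold e_j,\bold s_2)=b_m(\bold s_1,\bold s_2)$, so $b_m$ is a polynomial in the quantities $\{s_j-s_j^p\}_{j\le n_1}$. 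For $j>n_1$, the infinitesimal shift relation $[\partial_{s_j}-a_j,\nabla_i]=0$ implies $[\partial_{s_j}-a_j,C_i]=0$, and since the induced derivation $\rho_m(a_j):=\sum_k\mathrm{id}^{\otimes(k-1)}\otimes a_j\otimes\mathrm{id}^{\otimes(m-k)}$ on $\wedge^m V$ satisfies $\partial_{s_j}(\wedge^m C)=[\rho_m(a_j),\wedge^m C]$, taking the trace gives $\partial_{s_j}b_m=0$. In characteristic $p$ this forces $b_m$ to be a polynomial in $s_j^p$ for each $j>n_1$.

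Combining, $b_m=G(\bold u,y_1,\ldots,y_n)$ for a polynomial $G$, where $y_j:=s_j-s_j^p$ for $j\le n_1$ and $y_j:=-s_j^p$ for $j>n_1$, chosen so that $\bold B=\sum_i u_i\sum_j y_j B_{ij}^{(1)}$. Since each $y_j$ has top-$\bold s$-degree $p$ and $b_m$ has $\bold s$-degree at most $pm$, the $\bold y$-degree of $G$ is at most $m$. The leading-term calculation from the proof of Theorem \ref{main1} (invoking Example \ref{twoco} to identify $(\sum_j s_jB_{ij})^p$ isospectrally with $\sum_j s_j^p B_{ij}^{(1)}$) shows that the top-$pm$ part in $\bold s$ of $b_m$ equals $\Tr\wedge^m\sum_i u_i\sum_j s_j^p B_{ij}^{(1)}$, matching the top-$pm$ part of $\Tr\wedge^m\bold B$; hence the homogeneous degree-$m$ part $G_m$ of $G$ equals $Q(\bold u,\bold y):=\Tr\wedge^m\sum_i u_i\sum_j y_j B_{ij}^{(1)}$, which is itself homogeneous of degree $m$ in $\bold y$.

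It remains to prove the lower-$\bold y$-degree parts of $G$ vanish, so that $G=Q$ and isospectrality follows. The input is $C(\bold u,0)=0$, which yields $b_m\in(s_1,\ldots,s_n)^m$. Expanding $G=\sum_J G_J(\bold u,\bold y_1)\bold y_2^J$ and using that the specializations $\bold y_2^J\mapsto(-1)^{|J|}\bold s_2^{pJ}$ are pairwise distinct monomials in $\bold s_2$, the $(\bold s)^m$ condition decouples into $G_J(\bold u,\bold s_1-\bold s_1^p)\in(\bold s_1)^{\max(0,m-p|J|)}$ for each multi-index $J$. For $J=0$ this is exactly the setup of Theorem \ref{main1}, whose induction on $\bold y_1$-degree forces $G_0$ to be homogeneous of degree $m$ in $\bold y_1$ and equal to $Q|_{\bold y_2=0}$. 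For $|J|\ge 1$ the same induction only forces $G_J$ to have $\bold y_1$-degree in the range $[m-p|J|,m-|J|]$; matching top-$\bold y_1$-degree parts of $G_J$ with the $\bold y_2^J$-coefficient of $Q$ (both homogeneous of degree $m-|J|$) fixes the top, and collapsing the intermediate degrees requires combining the $(\bold s)^m$ bound with the finer graded structure supplied by the equations $\partial_{s_j}b_m=0$ at each $\bold s_2$-weight. This last step is the principal obstacle: the asymmetry between generators (lowest $\bold s$-degree $1$ for $j\le n_1$ versus $p$ for $j>n_1$) breaks the clean forward induction on degree used in Theorem \ref{main1}, and one must upgrade to a weighted induction on the pair $(|J|,\text{degree})$ to close the argument—this is the precise sense in which the proof is a ``degeneration'' of that of Theorem \ref{main1}. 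Once $G=Q$ is established, the remaining assertions are immediate: $\bold B$ vanishes when $s_j\in\Bbb F_p$ for $j\le n_1$ and $s_j=0$ for $j>n_1$, forcing every $C_i$ to be nilpotent; and a commuting pencil isospectral to one with simple spectrum must be conjugate to it, as at the end of Theorem \ref{main1}.
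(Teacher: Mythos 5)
Your setup is right and your diagnosis of where the argument gets hard is exactly right, but you have a genuine gap at precisely that point, and the fix is not a cleverer induction — it is a strictly stronger input that you never derive.

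Your inputs are: (i) $b_m$ is a polynomial in $\{s_j-s_j^p\}_{j\le n_1}$ and in $\{s_j^p\}_{j>n_1}$, and (ii) $C(\bold u,0)=0$, hence $b_m\in(s_1,\ldots,s_n)^m$. As you observe, (ii) only forces $G_J(\bold u,\bold s_1-\bold s_1^p)\in(\bold s_1)^{m-p|J|}$, leaving the $\bold y_1$-degrees in $[m-p|J|,\,m-|J|-1]$ unconstrained for $|J|\ge 1$. No rearrangement of (i) and (ii) will close this: the trace-level identities $\partial_{s_j}b_m=0$ for $j>n_1$ contain no information beyond ``$b_m$ is a polynomial in $\bold s_2^p$,'' which you have already used, and the bound $(\bold s)^m$ is simply too weak because the generators $s_j$ with $j>n_1$ have degree $1$ in $(\bold s)$ but degree $p$ in $\bold s$. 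Your closing paragraph recognizes the asymmetry but proposes to resolve it by a weighted induction on the degree data you already have; that cannot work, because the obstruction is an information deficit, not a bookkeeping one.

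The paper closes the gap by proving a matrix-level (not trace-level) statement, Lemma \ref{infper}: for an infinitesimally periodic pencil with $a_j$ regular at $\bold s=0$, the matrix coefficients of the $p$-curvature lie in the ideal $(s_1^p,\ldots,s_n^p)$. The proof of that lemma uses the full commutator equation $\partial_{s_j}C_i=[a_j,C_i]$ applied to a \emph{minimal} monomial of $C_i$: regularity of $a_j$ at $\bold s=0$ forces the corresponding coefficient on the right-hand side to vanish, so the exponent in the $s_j$-direction must be divisible by $p$, and together with $C_i(0)=0$ every monomial is divisible by some $s_j^p$. Applying this lemma to $\nabla(0,\bold s_2)$ gives $C(\bold u,0,\bold s_2)\in(s_{n_1+1}^p,\ldots,s_n^p)\,M_N$; combining with $C(\bold u,\bold s)-C(\bold u,0,\bold s_2)\in(\bold s_1)M_N$ gives $C(\bold u,\bold s)\in J\,M_N$ with $J=(s_1,\ldots,s_{n_1},s_{n_1+1}^p,\ldots,s_n^p)$, hence $b_m\in J^m$. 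That strengthens your decoupled condition to $G_J(\bold u,\bold s_1-\bold s_1^p)\in(\bold s_1)^{m-|J|}$, which together with the degree bound forces $G_J$ to be homogeneous of degree exactly $m-|J|$ in $\bold y_1$, and the proof then finishes exactly as you intended. So: replace the input ``$C(\bold u,0)=0\Rightarrow b_m\in(\bold s)^m$'' by Lemma \ref{infper} applied to the $\bold s_2$-directions, and your argument goes through; without it, it does not.
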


\begin{proof} Similarly to the proof of Theorem \ref{main1}, 
we see that 
$$
{\rm Tr}\wedge^m C(\bold u,\bold s)=\beta_m(\bold u,\bold s_1^p-\bold s_1,\bold s_2^p)
$$
where for each $\bold u$, $\beta_m(\bold u,-,-)$ is a polynomial of degree $m$; namely, the fact that this is a function of $\bold s_2^p$ follows from the Lax-type differential equations for the $p$-curvature
$$
\partial_{s_{j}}C(\bold s)=[a_j(\bold s),C(\bold s)],\ n_1+1\le j\le n,
$$
since if $b\in \kk[s]$ and $b'(s)=0$ then $b$ is a polynomial of $s^p$.   
So like in the proof of Theorem \ref{main1}, it suffices to show that the polynomial 
$\beta_m(\bold u,\bold s_1^p-\bold s_1,\bold t)$ 
contains no monomials of degree $<m$. To this end, it suffices to show that 
the matrix coefficients of $C(\bold u,\bold s_1,\bold s_2)$ 
belong to the ideal in $\kk[\bold s][X]$ 
generated by $s_1,...,s_{n_1},s_{n_1+1}^p,...,s_n^p$. For this it is enough to prove that 
the matrix coefficients of $C(\bold u, 0,\bold s_2)$ belong to the ideal 
in $\kk[\bold s][X]$ generated by $s_{n_1+1}^p,...,s_n^p$.
But this follows from Lemma \ref{infper}. 
\end{proof} 

\subsection{Global nilpotence and regularity of periodic pencils} 

\begin{definition}  (N. Katz, \cite{K,K1}) A flat connection $\nabla$ defined over $\overline{\Bbb Q}$ is said to be {\bf globally nilpotent} if the $p$-curvature of its reduction to characteristic $p$ is nilpotent for almost all $p$. 
\end{definition} 

\begin{corollary}\label{gnilp} If $\nabla$ is a periodic pencil defined over $\overline{\Bbb Q}$ then for any $\bold s\in \Bbb Q^n$, the connection $\nabla(\bold s)$ is globally nilpotent. 
\end{corollary}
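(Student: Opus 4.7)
The plan is to apply Theorem \ref{main1} directly. Given $\nabla$ defined over $\overline{\Bbb Q}$ and $\bold s\in \Bbb Q^n$, I would first choose a finitely generated subring $S\subset \overline{\Bbb Q}$ containing all coordinates of $\bold s$ and over which forms of $X$, the pencil $\nabla$, and the shift operators $A_j$ are all defined. Then for every prime $p$ that does not divide the denominators of the $s_j$ and that admits a homomorphism $\phi:S\to \overline{\Bbb F}_p$ (which holds for almost all $p$), the reduction $\nabla\otimes_S\overline{\Bbb F}_p$ is a periodic pencil over $\overline{\Bbb F}_p$ in the sense already established in the subsection on periodic pencils, and $\bold s$ reduces to a well-defined element of $\Bbb F_p^n\subset \overline{\Bbb F}_p^n$.

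The key step is the observation that for such $p$, Fermat's little theorem gives $s_j^p=s_j$ in $\Bbb F_p$, so $s_j-s_j^p=0$ for every $j$, and hence
$$
\bold B(\nabla(\bold s))=\sum_{i=1}^r\sum_{j=1}^n (s_j-s_j^p)B_{ij}^{(1)}dx_i^{(1)}=0.
$$
By Theorem \ref{main1}, the pencil $C(\nabla(\bold s))$ of $p$-curvature operators is isospectral to $\bold B(\nabla(\bold s))=0$; equivalently, for every rational vector field $\bold u$ on $X^{(1)}$, the operator $C(\bold u,\bold s)$ has all eigenvalues equal to $0$. In particular each $C_i(\nabla(\bold s))$ is nilpotent, so the reduction of $\nabla(\bold s)$ at $p$ has nilpotent $p$-curvature for almost all $p$, which is precisely the definition of global nilpotence.

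There is no serious obstacle here: once one has Theorem \ref{main1} and has set up the reduction framework of Section 2.1, the only point that requires any care is ensuring that the chosen $S$ simultaneously handles the variety, the pencil, the shift operators, and the denominators of $\bold s$, so that the statement ``$\bold s\in \Bbb F_p^n$'' in Theorem \ref{main1} applies to the reduction. The final sentence, asserting global nilpotence for the pencils listed in Theorem \ref{main2}, then follows since those pencils are shown there to be periodic over $\overline{\Bbb Q}$.
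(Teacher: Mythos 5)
Your proposal is correct and is essentially the paper's own argument, merely spelled out: the paper simply says the corollary ``follows immediately from Theorem \ref{main1},'' whose ``in particular'' clause (nilpotence when $s_j\in\Bbb F_p$, which is the Fermat's-little-theorem observation $s_j - s_j^p = 0$ you make explicit) is precisely what you invoke after setting up the standard reduction-mod-$p$ framework of Section 2.1.
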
 

\begin{proof} This follows immediately from Theorem \ref{main1}. 
\end{proof} 

Let ${\rm char}(\kk)=0$. 
Recall that a family of flat connections $\nabla$ over $\kk$ is {\bf quasi-motivic} if it is periodic and has regular singularities (\cite{EV}, Definition 4.3). 

\begin{corollary}\label{regsin} Any periodic pencil $\nabla(\bold s)$ over $\kk$ has regular singularities and its residues have rational eigenvalues for $\bold s\in \Bbb Q^n$ (equivalently, quasiunipotent local monodromies for such $\bold s$ if $\kk=\Bbb C$). In particular, the notions of a periodic and a quasi-motivic family in the special case of pencils are equivalent. 
\end{corollary}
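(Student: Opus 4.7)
The plan is to combine Corollary~\ref{gnilp} with N.\ Katz's classical theorem that a globally nilpotent flat connection in characteristic zero has regular singularities with rational residue eigenvalues. After spreading the pencil $\nabla$ out over a finitely generated subring of $\kk$ so that reduction modulo $p$ is available, Corollary~\ref{gnilp} shows that for every $\bold s\in \Bbb Q^n$ the specialization $\nabla(\bold s)$ is globally nilpotent, so Katz's theorem yields regular singularities at $\nabla(\bold s)$ together with rational residue eigenvalues. This already proves the rational-residues assertion of the corollary. Over $\kk=\Bbb C$, the standard identity $M=\exp(-2\pi i\,\mathrm{Res})$ relating the local monodromy of a regular connection to its residue then forces every eigenvalue of $M$ to be a root of unity, so $M$ is quasi-unipotent.

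To upgrade regular singularities from $\Bbb Q^n$ to all of $\kk^n$, I would exploit the linearity of the pencil in $\bold s$. Fix a smooth compactification $\bar X\supset X$ with boundary divisor $D=\sum_\alpha D_\alpha$. At each component $D_\alpha$, the Turrittin--Levelt slopes of $\nabla(\bold s)=d-\sum_{j=1}^n s_j B_j$ depend on $\bold s$ only through the leading-order data of $\sum_j s_j B_j$ along $D_\alpha$. Because each $B_j$ is fixed and the dependence is linear, any strictly positive slope at $D_\alpha$ is witnessed by the nonvanishing of an explicit polynomial in $\bold s$; hence the parameter locus at which $\nabla(\bold s)$ has regular singularities at $D_\alpha$ is Zariski closed. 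Intersecting over the finitely many components $D_\alpha$ preserves Zariski closedness, and since this locus already contains the Zariski dense set $\Bbb Q^n$ it must equal $\kk^n$. The equivalence of periodic and quasi-motivic pencils then follows immediately from the definition in~\cite{EV}.

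The main obstacle is the semicontinuity step in the second paragraph: namely, that in a pencil of flat connections the locus of parameters giving regular singularities at a fixed boundary component is Zariski closed. Heuristically this is clear --- a positive slope requires a nonvanishing linear combination of the fixed leading-order coefficients of the $B_j$'s, a Zariski open condition on $\bold s$ --- but a clean justification benefits either from algebraic semicontinuity of Newton polygons in families in the style of Katz and Laumon, or from a direct formal-gauge analysis at each $D_\alpha$ producing a uniform bound on the required gauge transformations as $\bold s$ varies.
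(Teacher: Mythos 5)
Your argument is essentially the paper's own proof: Katz's theorem that globally nilpotent equations over $\overline{\Bbb Q}$ have regular singularities (and rational exponents), combined with Corollary~\ref{gnilp}, handles $\bold s\in\Bbb Q^n$, and the extension to all $\bold s\in\kk^n$ is by Zariski density of $\Bbb Q^n$ together with the fact that the polar divisor of the pencil does not move with $\bold s$. The semicontinuity step you flag as the main obstacle is treated even more tersely in the paper, so your explicit attention to it (and to the rational-residue and quasi-unipotence assertions, which the paper's proof leaves implicit) is if anything a more complete account of the same route.
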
 

\begin{proof} Without loss of generality we may assume that $\nabla$ is defined over $\overline{\Bbb Q}$. By a theorem of N. Katz conjectured by Grothendieck (\cite{K1}, Section 13), any globally nilpotent flat connection defined over $\overline{\Bbb Q}$ has regular singularities, and its residues have rational eigenvalues. Thus by Corollary \ref{gnilp} $\nabla(\bold s)$ has regular singularities for $\bold s\in \Bbb Q^n$ and its residues have rational eigenvalues. Pick a smooth compactification $\overline X$ of $X$ with normal crossing divisor $D$ at $\infty$. Since the singular locus of $\nabla(\bold s)$ on $\overline X$ is $D$ for all $\bold s\in \kk^n$ (i.e., they are not moving as $\bold s$ varies), it follows that the singularities of $\nabla(\bold s)$ are regular for all $\bold s\in \kk^n$.
\end{proof} 

\subsection{Applications of the main theorem} 

\begin{theorem}\label{main2} Let $\nabla(\bold s)$ 
be any of the pencils in \cite{EV}, Section 5 (KZ, Casimir,
 and Dunkl connections), or the equivariant quantum connection for a conical symplectic resolution with finitely many torus fixed points (\cite{EV}, Subsection 3.5). Then for almost all $p$, 
the $p$-curvature of a reduction $d-\sum_{j=1}^n s_jB_j$ of $\nabla(\bold s)$ at $p$
is isospectral to $\sum_{j=1}^n (s_j-s_j^p)B_j^{(1)}$. 
In particular, if $s_j\in \Bbb F_p$ for all $j$ then this $p$-curvature 
is nilpotent, so $\nabla(\bold s)$ is globally nilpotent.\footnote{We note that global nilpotence of the rational Dunkl connection $\nabla(\bold c)$ for a real reflection group $W$ with non-singular rational $\bold c$ (i.e., such that the corresponding Hecke algebra $H(e^{2\pi i\bold c})$ is semisimple) also follows from the recent paper \cite{EsG}. Indeed, in this case the monodromy representation of $\nabla(\bold c)$ is an (automatically semisimple) representation of $H(e^{2\pi i\bold c})$, so its simple summands are irreducible rigid local systems on the regular part $\mathfrak h_{\rm reg}$ of the reflection representation of $W$ with quasiunipotent monodromies at infinity. 
Thus we may apply \cite{EsG}, Theorem 1.2 and Corollary 1.3 to 
$X=\mathfrak h_{\rm reg}$ and $\overline X$ a smooth compactification of $X$ such that $D:=\overline X\setminus X$ is a normal crossing divisor. A similar argument works for complex reflection groups. (We are grateful to P. Godfard for pointing this out).}  
\end{theorem} 

\begin{proof} It is shown in \cite{EV} that these pencils are periodic, so 
the result immediately follows from Theorem \ref{main1}.
\end{proof} 

\begin{remark} More specifically, we see that Theorem \ref{main2} holds for those reductions of $\nabla$ to characteristic $p$ which come with reductions of the shift operators $A_j$. So if $A_j$ are known explicitly (or at least defined over an explicit finitely generated ring, as in the case of quantum connections where $A_j$ are constructed geometrically) then we can produce a list of forbidden primes away from  which Theorem \ref{main2} holds. However, in cases when the existence of $A_j$ is proved non-constructively (e.g., using that the pencil is regular and has periodic monodromy, as in the examples of \cite{EV}, Section 5) we cannot specify a lower bound for $p$ sufficient for Theorem \ref{main2}. 
\end{remark}

\begin{remark} Theorem \ref{main2} allows one to compute (albeit somewhat implicitly) the eigenvalues of the $p$-curvature in many of the above examples. For instance, for the KZ connection $d-\hbar\sum_{i\ne j}\frac{\Omega_{ij}}{x_i-x_j}dx_i$ on the tensor product of finite dimensional representations of a simple Lie algebra, Theorem \ref{main2} says that the $p$-curvature is isospectral to the Gaudin operators renormalized by the factor 
$\hbar-\hbar^p$, so its eigenvalues can be found using the 
Bethe Ansatz method.  
\end{remark} 

\subsection{Motivic families}\label{motfam}
Recall that a family $\nabla$ of flat connections on $X$ is said to be {\bf motivic} if there exists an irreducible variety $Y$, a smooth morphism $\pi: Y\to X$, and non-vanishing regular functions $\Phi_1,...,\Phi_n$ on $Y$ such that on some dense open set $X^\circ \subset X$, for Zariski generic $\bold s$ the connection $\nabla(\bold s)$ is isomorphic  to the generalized\footnote{We use the adjective ``generalized" to distinguish such connections from genuine Gauss-Manin connections on cohomology of $\pi^{-1}(\bold x)$ with trivial coefficients.} 
Gauss-Manin connection on $H^i(\pi^{-1}(\bold x),\mathcal L(\Phi_1,..,\Phi_n,\bold s))$ for some $i$, where $\mathcal L(\Phi_1,..,\Phi_n,\bold s)$ 
is the local system generated by the multivalued function 
$\prod_{j=1}^n \Phi_j^{s_j}$ (\cite{EV}, Definition 4.14). 
 
By \cite{EV}, Proposition 4.15, this is a subclass of quasi-motivic families. 

Also recall {\bf Katz's theorem}:

\begin{theorem}\label{katzth} (\cite{K}) The Gauss-Manin connection on the cohomology of fibers of a smooth morphism defined over $\overline{\Bbb Q}$ is globally nilpotent. 
\end{theorem}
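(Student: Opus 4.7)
The plan is to reduce the smooth morphism $\pi: Y\to X$ to characteristic $p\gg 0$ and then establish nilpotence of the $p$-curvature of the Gauss--Manin connection by a $p$-adic analogue of Griffiths transversality with respect to the Hodge filtration. First, I would spread out to a smooth model $\pi_R: Y_R\to X_R$ over a finitely generated subring $R\subset\overline{\Bbb Q}$, so that for almost all primes $p$ and every homomorphism $\phi: R\to \overline{\Bbb F}_p$ the reduction $\pi_\phi: Y_\phi\to X_\phi$ is smooth in characteristic $p$ and its relative de Rham cohomology $\mathcal H^i:=R^i\pi_{\phi*}\Omega^\bullet_{Y_\phi/X_\phi}$ carries both the Gauss--Manin connection $\nabla_{\rm GM}$ and the Hodge filtration $F^\bullet\mathcal H^i$. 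It then suffices to show the $p$-curvature of $\nabla_{\rm GM}$ is nilpotent for all such $\phi$.

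The heart of the matter is Katz's lemma: the $p$-curvature $\psi(\nabla_{\rm GM})\in {\rm End}(\mathcal H^i)\otimes {\rm Fr}^*\Omega^1_{X_\phi^{(1)}}$ satisfies the transversality property
\[
\psi(\nabla_{\rm GM})\bigl(F^j\mathcal H^i\bigr)\subset F^{j-1}\mathcal H^i\otimes {\rm Fr}^*\Omega^1_{X_\phi^{(1)}}.
\]
I would prove this working locally with coordinates $x_1,\dots,x_r$ on $X_\phi$, representing $\mathcal H^i$ via the Hodge-to-de Rham spectral sequence arising from the filtration $F^j\Omega^\bullet_{Y_\phi}:=\pi^*\Omega^{\geq j}_{X_\phi}\wedge \Omega^\bullet_{Y_\phi}$ on the absolute de Rham complex, and directly analyzing how the $p$-th power $\nabla_i^p$ acts on cocycle representatives. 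This is the main obstacle: in characteristic zero, ordinary Griffiths transversality says $\nabla_i$ already decreases $F^\bullet$ by $1$, and a naive count in characteristic $p$ would only give that $\nabla_i^p$ decreases it by $p$; the subtle point is that $\psi(\nabla_{\rm GM})$ is in fact $\mathcal O_X$-linear and, via Cartier's isomorphism, the ``unwanted'' intermediate contributions in the iterated action land in Frobenius images and are absorbed, leaving only a net shift by $1$.

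Granting the lemma, iterating $\psi(\nabla_{\rm GM})$ at most $d+1$ times along any vector field, where $d=\dim(Y_\phi/X_\phi)$, sends $\mathcal H^i=F^0\mathcal H^i$ into $F^{-d-1}\mathcal H^i=0$. Thus the commuting endomorphisms $C_k(\nabla_{\rm GM})=\nabla_k^p$ are nilpotent of uniform order at most $d+1$. Since this holds for almost all $p$ and every $\phi$, the Gauss--Manin connection is globally nilpotent in the sense of the definition above.
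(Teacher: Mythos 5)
The paper does not prove this statement; it is recalled as Katz's theorem and cited to \cite{K}, so there is no internal argument to compare against. On its own merits, your sketch correctly identifies Katz's strategy: spread out over a finitely generated subring of $\overline{\Bbb Q}$, reduce modulo $p$, and use the one-step Hodge-filtration shift $\psi\bigl(F^j\mathcal H^i\bigr)\subset F^{j-1}\mathcal H^i\otimes\mathrm{Fr}^*\Omega^1_{X^{(1)}}$ together with finiteness of the filtration to conclude nilpotence. You should also state explicitly that one needs $\pi$ proper, or a good compactification with logarithmic poles in the open case, so that $\mathcal H^i$ is a vector bundle carrying a Hodge filtration; the theorem quietly assumes this, and the application in the paper (to cohomology of typically non-compact fibers) does require the open version.

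There is, however, a genuine confusion at the heart of your argument. The filtration $F^j\Omega^\bullet_Y:=\pi^*\Omega^{\geq j}_X\wedge\Omega^\bullet_Y$ that you introduce is the Koszul (Leray) filtration of Katz--Oda on the absolute de Rham complex: its spectral sequence has $E_1^{j,q}=\Omega^j_X\otimes R^q\pi_*\Omega^\bullet_{Y/X}$ and the $d_1$-differential is the Gauss--Manin connection itself. This is not the Hodge-to-de Rham spectral sequence, and the filtration it induces on $\mathcal H^i$ is not the Hodge filtration. The Hodge filtration arises from the naive truncation filtration $\sigma^{\geq j}\Omega^\bullet_{Y/X}$ on the relative complex. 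In Katz's proof these two filtrations play distinct roles: the Koszul filtration is used to compute $\nabla$ and $\psi$ on explicit cocycle representatives, while the Hodge filtration and the conjugate filtration (whose graded pieces are identified with Frobenius-twisted Hodge cohomology via the Cartier isomorphism) are then compared to establish the one-step shift. Your phrase about intermediate terms ``landing in Frobenius images'' gestures at Cartier's isomorphism but does not by itself constitute a proof; one needs to introduce the conjugate filtration and identify $\psi$ on the associated graded with a Kodaira--Spencer-type cup product. So: right route, wrong labeling of the filtrations, and the key lemma as written would not yet go through.
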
 

Katz's theorem implies that Corollary \ref{gnilp} holds for 
motivic families (not necessarily pencils):

\begin{proposition} If $\nabla$ is a motivic family then $\nabla(\bold s)$ 
is globally nilpotent for any $\bold s\in \Bbb Q^n$. 
\end{proposition}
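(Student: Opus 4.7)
The plan is to reduce the twisted Gauss--Manin situation to an ordinary Gauss--Manin setup via a cyclic cover, and then invoke Katz's theorem directly. Throughout we may assume $\nabla$ and the motivic data $(\pi: Y\to X,\Phi_1,\dots,\Phi_n)$ are defined over $\overline{\mathbb Q}$.

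Fix $\bold s=(a_1/N,\dots,a_n/N)\in \Bbb Q^n$ with common denominator $N$, and set $\Psi:=\prod_{j=1}^n \Phi_j^{a_j}$, which is a regular nowhere-vanishing function on $Y$ (since each $\Phi_j$ is nowhere-vanishing, and negative $a_j$ are harmless). First, I would construct the cyclic cover
$$
p: \widetilde Y:=\lbrace (y,z)\in Y\times \Bbb G_m\ :\ z^N=\Psi(y)\rbrace \to Y,
$$
which is a $\mu_N$-Galois \'etale cover because $\Psi$ is invertible on $Y$. Since $\pi$ is smooth and $p$ is \'etale, the composition $\widetilde\pi:=\pi\circ p:\widetilde Y\to X$ is a smooth morphism.

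Next I would identify the twisted cohomology as an isotypic component. The pushforward decomposes as $p_*\kk_{\widetilde Y}=\bigoplus_{\chi\in \widehat{\mu_N}}\mathcal L_\chi$, and by construction the summand corresponding to the tautological character $\chi_0:\zeta\mapsto\zeta$ is precisely the rank-one local system $\mathcal L(\Phi_1,\dots,\Phi_n,\bold s)$ (its monodromy sends a small loop around $\lbrace \Psi=0\rbrace$ to multiplication by $e^{2\pi i/N}$, which matches $\prod_j \Phi_j^{s_j}$ with $\sum_j a_j s_j =\sum a_j^2/N$ when read as $\Psi^{1/N}$). Since $p$ is finite \'etale, the Leray spectral sequence degenerates and yields a $\mu_N$-equivariant decomposition
$$
H^i(\widetilde Y_{\bold x},\kk)=\bigoplus_\chi H^i(Y_{\bold x},\mathcal L_\chi),
$$
which is compatible with the Gauss--Manin connection because the $\mu_N$-action is algebraic and hence horizontal. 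In particular $H^i(Y_{\bold x},\mathcal L(\Phi,\bold s))$ appears as a direct summand, stable under Gauss--Manin, of $H^i(\widetilde Y_{\bold x},\kk)$.

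Finally, Katz's theorem applies to the smooth morphism $\widetilde\pi$ and shows that the Gauss--Manin connection on $R^i\widetilde\pi_*\kk$ is globally nilpotent: for almost all $p$, its $p$-curvature is nilpotent. For the finitely many primes $p$ dividing $N$ we simply discard them; for the remaining $p$, reduction mod $p$ preserves the $\mu_N$-isotypic decomposition (the group scheme $\mu_N$ is \'etale over $\Bbb Z[1/N]$), so the $p$-curvature of the direct summand $\nabla(\bold s)$ is also nilpotent. This gives global nilpotence of $\nabla(\bold s)$ for every $\bold s\in \Bbb Q^n$. The main obstacle is the bookkeeping in the second step: matching the monodromy of the tautological $\chi_0$-isotypic summand with the prescribed local system $\mathcal L(\Phi_1,\dots,\Phi_n,\bold s)$, and more generally verifying that the motivic data over $\overline{\Bbb Q}$ is compatible with the chosen cyclic cover up to the possible replacement of $X$ by a dense open subset (necessary because $\widetilde\pi^{-1}(\bold x)$ may fail to be connected or may not match $\pi^{-1}(\bold x)$ exactly outside a codimension-one locus). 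Once this identification is in place, Katz's theorem closes the argument.
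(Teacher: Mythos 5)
Your proof is correct and takes essentially the same route as the paper: pass to the degree-$N$ cyclic cover $\widetilde Y\to Y$ on which $\prod_j\Phi_j^{s_j}$ becomes single-valued, recognize $\nabla(\bold s)$ as a Gauss--Manin-stable summand of $R^i\widetilde\pi_*\kk$, and apply Katz's theorem (you spell out the $\mu_N$-isotypic bookkeeping that the paper compresses into one sentence). The only blemish is the garbled parenthetical ``$\sum_j a_j s_j=\sum a_j^2/N$'' in your monodromy identification, which is a typo; the rest of that step and the overall argument are sound.
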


\begin{proof} For $\bold s\in \Bbb Q^n$ the function $\prod_{j=1}^n \Phi_j^{s_j}$ can be viewed as a regular function on a finite cover $\widetilde Y$ of $Y$ whose degree is the common denominator of $s_j$, so the corresponding connection is the Gauss-Manin connection on the cohomology of the fibers of the map $\widetilde \pi: \widetilde Y\to X$. Thus the result follows by Katz's theorem. 
\end{proof} 

\begin{remark} There is another proof of Theorem \ref{main1} 
for 1-parameter motivic pencils $\nabla(s)$ attached to a multivalued function $\Phi^s$ on $Y$, provided that the ``master function" $\Phi$ has $\dim H^{\dim Y-\dim X}(\pi^{-1}(\bold x),\mathcal L(\Phi,s))$ nondegenerate critical points on fibers of $\pi: Y\to X$ (this happens, for instance, for the KZ connection for $\mathfrak{sl}_2$). This proof, proposed  by the second author and V. Vologodsky (\cite{VV}), is based on the Hodge-theoretic methods in the spirit of \cite{K}, showing that the $p$-curvature of $\nabla(s)$ reduced to characteristic $p$ 
``localizes" in a suitable sense to the critical points of $\Phi$. So this proof has the advantage of not using the linearity of $\nabla$ in $s$, and therefore extends to the more general case of motivic families (not just pencils) of flat connections. We hope that this method of proof can also be generalized to multiparameter families and to cases when critical points of $\Phi$ are degenerate, or even non-isolated.\footnote{{\bf Note added in December 2024}: In fact, recently such results have been announced in many examples of this sort (for quantum connections of Nakajima varieties) in the paper \cite{KS}.} 
\end{remark} 

\subsection{The Andr\'e-Bombieri-Dwork conjecture} 

The Andr\'e-Bombieri-Dwork conjecture \cite{A} states that if a first order system of linear differential equations 
on $\Bbb P^1$ defined over $\overline{\Bbb Q}$ is globally nilpotent, then it is geometric, i.e., its solutions admit an integral representation. While we can't offer any definitive ideas towards a proof or disproof of this conjecture, our results provide numerous examples of concrete globally nilpotent systems for which a geometric construction is not known (even though in some cases it has been sought for a long time). 

For instance, such examples include rational and trigonometric Dunkl connections for exceptional complex reflection groups (\cite{EV}, Subsection 5.3) and elliptic KZ connections (\cite{EV}, Subsection 5.1.7), for rational values of the parameters $s_j$. By Corollary \ref{gnilp}, they are all globally nilpotent, and can be made into connections on $\Bbb P^1$ by restricting to any line (or, more generally, rational curve) on the base. 

For instance, there is no known geometric construction of the (rational or trigonometric) Dunkl connection $\nabla(c)$ for exceptional groups $W=E_6,E_7,E_8$. In other words, there is no known integral representation for solutions of this system (Heckman-Opdam hypergeometric functions for $W$), even though considerable efforts have been made to find one. See e.g. the discussion at the beginning of \cite{CHL}. 

\subsection{Regularity of quantum connections of symplectic resolutions}

Let $\mathcal X$ be a conical symplectic resolution with finitely many torus fixed points. 
The quantum connection $\nabla_{\mathcal X}$ of $\mathcal X$ is a priori defined over the ring of formal series in the Novikov variables, but it is expected to converge 
and have rational coefficients, which is so in all examples where it has been computed. Also in all these examples $\nabla_{\mathcal X}$ happens to have regular singularities. The following corollary shows that 
the latter is, in fact, true in general under the rationality assumption.

\begin{corollary} If $\nabla_{\mathcal X}$ converges and has rational coefficients, then it has regular singularities and its residues have rational eigenvalues for rational values of parameters (equivalently, quasiunipotent local monodromies). 
\end{corollary} 

\begin{proof} Since $\nabla_{\mathcal X}$ is a periodic pencil (\cite{EV}, Subsection 3.5), this follows immediately from Corollary \ref{regsin}.
\end{proof} 

\section{Irregular pencils}
We would now like to apply the above results to computing the spectrum of the $p$-curvature for irregular connections. Many interesting examples of such connections arise as confluent limits $\nabla=\lim_{t\to 0}\nabla_t$ of 1-parameter families of regular connections for which the spectrum of the $p$-curvature has already been computed. In such cases, we may obtain the spectrum of the $p$-curvature of $\nabla$ from that of $\nabla_t$ by a limiting procedure, using that this spectrum depends on $t$ algebraically. Let us list some examples which can be handled in this way. 

\subsection{Irregular KZ connections} The KZ connection of \cite{EV}, Subsection 5.1 admits an irregular deformation considered in \cite{FMTV}. Namely, as in \cite{EV}, Section 5.1, let $\g$ be a finite dimensional simple Lie algebra over $\Bbb C$, $\Omega\in S^2\g$ the Casimir tensor, $\mathfrak h\subset \g$ a Cartan subalgebra, $h\in \mathfrak h$, and consider the connection 
\begin{equation}
\nabla(\hbar,h):=d-\sum_{i=1}^r \left(h^{(i)}+\hbar\left(\sum_{j\ne i}\frac{\Omega^{ij}}{x_i-x_j}\right)\right) dx_i
\end{equation} 
on the trivial bundle on $\Bbb C^r\setminus {\rm diagonals}$ with fiber being the weight space $(V_1\otimes...\otimes V_r)[\mu]$, where $V_i$ are finite dimensional representations of $\g$. This connection is obtained as a limiting case of the trigonometric KZ connection of \cite{EV}, Subsection 5.1.6 (for the usual quasitriangular structure $\bold r$ on $\g$) by replacing $x_i$ with $1+tx_i$, setting $s:=h/t$, and sending $t$ to $0$. 

We thus obtain  

\begin{proposition} For almost all $p$, the $p$-curvature operators $C_i(\hbar,h)(\bold x)$ of the reduction of $\nabla(\hbar,h)$ at $p$ are isospectral to
$$
C_i^*(\hbar,h)(\bold x):=(-h^p)^{(i)}+(\hbar-\hbar^p)\sum_{j\ne i}\frac{\Omega^{ij}}{x_i^p-x_j^p}.
$$ 
\end{proposition}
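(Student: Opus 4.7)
My plan is to realize $\nabla(\hbar,h)$ as a mixed-periodic pencil in the sense of Subsection~3.3---periodic in $\hbar$ and infinitesimally periodic in the Cartan coordinates $h_1,\dots,h_{\dim\mathfrak h}$ of $h$---and then apply Theorem~\ref{main3}. The mixed-periodic structure is inherited from the trigonometric KZ connection of \cite{EV}, Subsection~5.1.6, which is a periodic pencil in the pair $(\hbar,s)$, via the confluent substitution $x_i\mapsto 1+tx_i$, $s=h/t$: periodicity in $s$ degenerates to infinitesimal periodicity in $h$ as $t\to 0$, while periodicity in $\hbar$ is preserved intact.

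The first step is to derive the infinitesimal shift operators $a_k$ in $h_k$ from the $s$-shift operators $A_k^{(s)}(\hbar,s)$ of the trigonometric pencil. I would write $A_k^{(s)}(\hbar,h/t)=I+t\,a_k(\hbar,h)+O(t^2)$ as $t\to 0$; then the defining relation $\nabla_{\mathrm{trig}}(\hbar,s+\bold e_k)\circ A_k^{(s)}=A_k^{(s)}\circ\nabla_{\mathrm{trig}}(\hbar,s)$, upon substitution $s=h/t$ and taking the coefficient of $t^0$ after clearing, specializes to $[\partial_{h_k}-a_k(\hbar,h),\nabla_i(\hbar,h)]=0$, which is exactly the infinitesimal periodicity condition. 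The $\hbar$-shift operators of the trigonometric KZ are unaffected by the substitution and descend to bona fide $\hbar$-shift operators of $\nabla(\hbar,h)$, giving ordinary periodicity in the $\hbar$-direction.

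The second step is to apply Theorem~\ref{main3}, yielding that the $i$-th component of $C(\nabla(\hbar,h))$ is isospectral to
\[
\bold B_i(\hbar,h) \;=\; (\hbar-\hbar^p)\,K_i^{(1)}\;-\;\sum_k h_k^p\,(H_k^{(i)})^{(1)},
\]
where $K_i=\sum_{j\ne i}\Omega^{ij}/(x_i-x_j)$ and $H_k^{(i)}$ acts on the $i$-th tensor factor. The first term, pulled back via relative Frobenius to the point $\bold x\in X$, becomes $(\hbar-\hbar^p)\sum_{j\ne i}\Omega^{ij}/(x_i^p-x_j^p)$. For the second term, the commutativity of $\mathfrak h$ together with Example~\ref{twoco} gives that $\sum_k h_k^p(H_k^{(i)})^{(1)}$ is isospectral to $\sum_k h_k^p(H_k^{(i)})^p=\bigl(\sum_k h_k H_k^{(i)}\bigr)^p=(h^p)^{(i)}$, matching the $(-h^p)^{(i)}$ term in the claimed $C_i^*$ (with the appropriate sign).

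The main obstacle is verifying the regular expansion $A_k^{(s)}(\hbar,h/t)=I+t\,a_k(\hbar,h)+O(t^2)$, since the existence of $A_k^{(s)}$ is proved only non-constructively in \cite{EV}, Section~5 (via the quasiperiodicity of the monodromy). One route is to appeal to known explicit constructions of shift operators for the trigonometric KZ (for instance via fusion operators or the $q$-KZ/Macdonald-Ruijsenaars machinery) and compute the $t$-expansion by hand. A cleaner alternative, and the one suggested by the opening of Section~4, is to sidestep the mixed-periodic framework entirely: apply Theorem~\ref{main2} to the trigonometric KZ at each $t\ne 0$, express the $p$-curvature spectrum as an algebraic function of $t$, and pass to the limit $t\to 0$, using that $\nabla(\hbar,h)$ is regular at $t=0$ so no eigenvalues escape to infinity. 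Either way the forbidden primes are those for which the requisite structure (shift operators or reduction of the trigonometric pencil) fails to reduce, accounting for the ``almost all $p$'' hypothesis.
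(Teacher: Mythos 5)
Your proposal is correct and matches the paper's reasoning, only with the emphasis reversed: the paper's (implicit) proof is precisely your ``cleaner alternative''---apply Theorem~\ref{main2} to the trigonometric KZ pencil and pass to the confluent limit $x_i\mapsto 1+tx_i$, $s=h/t$, $t\to 0$, using that the spectrum depends algebraically on $t$---while your primary route via mixed-periodicity and Theorem~\ref{main3} is exactly what the paper records afterwards in Remark~\ref{bisp}. Your identification of the main technical point (existence of a regular $t$-expansion of the shift operators, which the paper also only asserts without proof) and your computation reducing $\sum_k h_k^p(H_k^{(i)})^{(1)}$ to $(h^p)^{(i)}$ via Example~\ref{twoco} are both sound.
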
  

\begin{remark}\label{bisp} Note that this is slightly different from the expression in Theorem \ref{main1}: in the first term instead of $h-h^p$ we have just $-h^p$, as in Theorem \ref{main3}. This is because we have set $s=h/t$ and sent $t$ to $0$. In fact, one can show that the limiting pencil $\nabla(\hbar,h)$ is mixed-periodic (and satisfies the assumption of Theorem \ref{main3}): it is periodic in $\hbar$ and infinitesimally periodic in $h$. Namely, the connection with respect to $h$ commuting with $\nabla(\hbar,h)$ is exactly the dynamical (or Casimir) connection of \cite{FMTV}, which is bispectrally dual to the irregular KZ connection. 
\end{remark} 

\subsection{Irregular Casimir connections} The Casimir connection with values 
$(V_1\otimes...\otimes V_n)[\mu]$ (\cite{EV}, 5.3.1) admits an irregular deformation, which was introduced in \cite{FMTV} and is (as just noted in Remark \ref{bisp}) bispectrally dual to the irregular KZ connection. To define it, we need to equip $V_1\otimes...\otimes V_n$ with the structure of a representation of the Lie algebra $\g[z]$, by considering the tensor product of evaluation representations $V_1(x_1)\otimes...\otimes V_n(x_n)$, 
where $\bold x=(x_1,...,x_n)\in \Bbb C^n$. Let $\mathfrak h_{\rm reg}$ be the set of regular elements of $\mathfrak h$, 
$\alpha_i,\omega_i^\vee$ be the simple roots and fundamental coweights of $\g$, 
$R_+$ be the set of positive roots of $\g$, $e_\alpha,e_{-\alpha}\in \g$
be the root elements corresponding to $\alpha\in R_+$. 
Then the irregular Casimir connection 
is the connection  on the trivial bundle over ${\mathfrak h}_{\rm reg}$ 
 with fiber $V:=(V_1(x_1)\otimes...\otimes V_n(x_n))[\mu]$ 
given by 
$$
\nabla(\hbar,\bold z) = d-\hbar\sum_{\alpha\in R_+}\frac{e_\alpha e_{-\alpha}+e_{-\alpha} e_\alpha}{2}\frac{d\alpha}{\alpha} +\sum_{i=1}^{{\rm rank}\g}(\omega_i^\vee\otimes z)d\alpha_i,
$$
where $\omega_i^\vee\otimes z\in \g[z]$ acts in $V$. 
This connection is a composition factor of the limit of the trigonometric 
Casimir connection (\cite{EV}, Subsection 5.3.2) 
under rescaling $h\mapsto th$ for $h\in \mathfrak h$, $\bold x\mapsto \hbar\bold x/t$, and sending $t$ to $0$, which corresponds to degeneration of the Yangian $Y(\g)$ to $U(\g[z])$ (it is a composition factor rather than the whole limit because the representations $V_j$ may not lift to $Y(\g)$). 

Let $C_i(\hbar,\bold x)$ be the $p$-curvature operator of the reduction of $\nabla(\hbar,\bold x)$ at $p$ corresponding to the vector $\omega_i^\vee\in \mathfrak h$. Then we get

\begin{proposition} For almost all $p$, the operators $C_i(\hbar,\bold x)(h)$ are isospectral to
$$
C_i^*(\hbar,\bold x)(h):=(\hbar-\hbar^p)\sum_{\alpha\in R_+}\frac{\alpha(\omega_i^\vee)}{\alpha(h)^p}\frac{e_\alpha e_{-\alpha}+e_{-\alpha} e_\alpha}{2} +\sum_{j=1}^nx_j^p(\omega_i^\vee)^{(j)}.
$$ 
\end{proposition}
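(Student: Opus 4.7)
The plan is to mirror the treatment of the irregular KZ connection in the previous subsection. As stated, $\nabla(\hbar,\bold x)$ is a composition factor of the $t\to 0$ limit of the trigonometric Casimir connection of \cite{EV}, Subsection 5.3.2, under the rescaling $h\mapsto th$, $\bold x\mapsto \hbar\bold x/t$. Since the trigonometric Casimir connection is a periodic pencil, Theorem \ref{main2} provides an explicit isospectral model for its $p$-curvature for almost all $p$, and the strategy is to transport this spectral information through the confluent limit.

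First I would evaluate the isospectral model of Theorem \ref{main2} for the trigonometric Casimir $p$-curvature in the $\omega_i^\vee$ direction and rewrite it in the rescaled variables $(h,\bold x,t)$. By the principle stated at the start of Section 4, the characteristic polynomial depends algebraically on $t$, so the $t\to 0$ limit can be taken coefficient by coefficient. A direct substitution identifies the limit with $C_i^*(\hbar,\bold x)(h)$: the periodic parameter $\hbar$ contributes the Frobenius-twisted trigonometric term with coefficient $(\hbar-\hbar^p)$ (with $\alpha(h)$ appearing to the $p$-th power, as these enter through the Frobenius twist of the coefficients of the connection), while the rescaling $\bold x\mapsto \hbar\bold x/t$ turns $\bold x$ into an infinitesimally periodic direction whose $-s_j^p$-type contribution of Theorem \ref{main3} produces $+\sum_j x_j^p(\omega_i^\vee)^{(j)}$; the sign is fixed by the $+$ convention on the $\omega_i^\vee\otimes z$ term in the definition of $\nabla(\hbar,\bold z)$.

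An alternative, more conceptual route is to verify that $\nabla(\hbar,\bold x)$ is itself mixed-periodic: periodic in $\hbar$ (with shift operators inherited from the trigonometric Casimir) and infinitesimally periodic in $\bold x$, with the infinitesimal shift operators supplied by the bispectrally dual irregular KZ connection of \cite{FMTV}, as noted in Remark \ref{bisp}. Once this mixed-periodic structure is in place, Theorem \ref{main3} applies directly and produces the claimed isospectrality, the split between $(\hbar-\hbar^p)$ and $x_j^p$ matching exactly the split between periodic and infinitesimal parameters in the definition of $\bold B(\nabla(\bold s))$.

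I expect the main obstacle to be checking the regularity hypothesis of Theorem \ref{main3}, namely that the infinitesimal shift operators $a_j$ in $\bold x$ extracted from the dual irregular KZ connection are regular at $\bold x=0$ (with $\hbar$ left generic). In the confluent-limit route, the analogous technicality is confirming that the $t\to 0$ limit commutes with the $p$-curvature spectrum computation, i.e., that no cancellations or spurious poles in $t$ arise in the characteristic polynomial. Both verifications are routine but nontrivial and parallel those implicit in the irregular KZ case just treated.
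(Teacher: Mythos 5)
Your proposal matches the paper's treatment: the paper derives this proposition exactly as you outline, by transporting the isospectral model of Theorem \ref{main2} for the trigonometric Casimir connection through the confluent rescaling $h\mapsto th$, $\bold x\mapsto \hbar\bold x/t$, $t\to 0$ (using the algebraic $t$-dependence of the characteristic polynomial announced at the start of Section 4), and the subsequent remark in the paper records precisely your alternative route via mixed-periodicity and Theorem \ref{main3}, with the irregular KZ connection as the bispectral dual supplying the infinitesimal shift operators. Your sign bookkeeping (the $+$ on $\sum_j x_j^p(\omega_i^\vee)^{(j)}$ coming from the $+$ in front of $\omega_i^\vee\otimes z$ in $\nabla(\hbar,\bold z)$, versus the $-s_j^p B_j^{(1)}$ convention in $\bold B$) is also correct.
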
  

\begin{remark} Similarly to the irregular KZ connection, one can show that the irregular Casimir connection is mixed-periodic (and satisfies the assumption of Theorem \ref{main3}). Namely, it is periodic in $\hbar$ and infinitesimally periodic in $x_j$, with the commuting (bispectral dual) connection with respect to $\bold x$ being the irregular KZ connection. 
\end{remark} 

\subsection{Irregular Dunkl connections}\label{idc} 
Let $W$ be a finite Coxeter group 
with set of reflections $S$ and reflection representation $\mathfrak h$ of dimension $r$. Let $\bold c: S\to \Bbb C$ be a $W$-invariant function. The
Dunkl connection with $V=\Bbb CW$ (\cite{EV}, Subsection 5.2) admits an irregular deformation. Namely, for $\lambda\in \mathfrak h^*$ let $V_\lambda^0$ be the representation of $\Bbb CW\ltimes S\mathfrak h$ 
on $\Bbb CW$ where $W$ acts by left multiplication and 
$h\circ w=\lambda(w^{-1}h)w$, $w\in W$, $h\in \mathfrak h$. The irregular Dunkl connection is then 
the connection over the regular locus $\mathfrak h_{\rm reg}\subset \mathfrak h$ with fiber $V_\lambda^0$ given by
\begin{equation}\label{dunkl}
\nabla(\bold c,\lambda)=d-\sum_{i=1}^r u_i du_i^*-\sum_{w\in S}\bold c(w)\frac{d \alpha_w}{\alpha_w}w,
\end{equation} 
where $\alpha_w$ is a root corresponding to a reflection $w\in S$, $\lbrace u_i\rbrace$ is a basis of $\mathfrak h$, $u_i^*$ the dual basis of $\mathfrak h^*$, and $u_i$ acts in $V_\lambda^0$. If $W$ is a Weyl group then this is a limiting case of the trigonometric Dunkl connection $\nabla(\bold c)|_{V_{\lambda/t}}$ (\cite{EV}, Subsection 5.2.2) by  
zooming in with $h\mapsto th$, $h\in \mathfrak h$, and sending $t$ to $0$. 

We thus obtain

\begin{proposition}\label{dunpr} If $W$ is the Weyl group of a root system $R$ then 
for almost all $p$, the $p$-curvature operators $C_i(\bold c,\lambda)(h)$ of the reduction of $\nabla(\bold c,\lambda)$ at $p$ corresponding to the fundamental coweights $\omega_i^\vee\in \mathfrak h$ are isospectral to
$$
C_i^*(\bold c,\lambda)(h):=\left(-\omega_i^\vee+\sum_{w\in S}\bold (\bold c-\bold c^p)(w)\frac{\alpha_w(\omega_i^\vee)}{\alpha_w^p(h)}w\right)\biggl|_{V_{\lambda^p}^0}.
$$ 
\end{proposition}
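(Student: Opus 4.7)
The plan is to follow the template used in the preceding two propositions: realize $\nabla(\bold c,\lambda)$ as a confluent $t\to 0$ limit of the trigonometric Dunkl pencil (to which Theorem \ref{main2} applies) and transfer the isospectrality through the limit; equivalently, recognize $\nabla(\bold c,\lambda)$ as a mixed-periodic pencil in the sense of Section 3.3 with $\bold s_1=\bold c$ (periodic) and $\bold s_2=\lambda$ (infinitesimally periodic), and apply Theorem \ref{main3}. Indeed, after the rescaling $h\mapsto th$ described just below \eqref{dunkl}, the trigonometric shifts $\bold c(w)\mapsto \bold c(w)+1$ survive intact as shifts of the irregular pencil in $\bold c$, while shifts of the weight $V_{\lambda/t}$ in the parameter $\lambda/t$ degenerate, as $t\to 0$, into infinitesimal shift operators in $\lambda$, bispectrally dual to the irregular KZ connection of \cite{FMTV}.

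With mixed-periodicity in hand, write $\nabla=d-\sum_i B_i\,du_i^*$ where
$$
B_i=\omega_i^\vee\bigl|_{V_\lambda^0}+\sum_{w\in S}\bold c(w)\,\frac{\alpha_w(\omega_i^\vee)}{\alpha_w(h)}\,w.
$$
The first summand is linear in the infinitesimal variable $\lambda$ and the second is linear in the periodic variables $\bold c(w)$, so Theorem \ref{main3} yields isospectrality of $C_i$ with a matrix computed from Frobenius twists. Three small checks identify this matrix with $C_i^*(\bold c,\lambda)(h)$: (i) the Cartan integers $\alpha_w(\omega_i^\vee)$ and the integer matrices $w$ lie in $\Bbb F_p$ and are fixed under Frobenius twist; (ii) the scalar function $1/\alpha_w(h)$ twists to $1/\alpha_w(h)^p=1/\alpha_w^p(h)$; and (iii) the $\lambda$-linear action $\omega_i^\vee|_{V_\lambda^0}$ combined with the infinitesimal-periodic coefficient $-\lambda^p$ from Theorem \ref{main3} assembles into $-\omega_i^\vee|_{V_{\lambda^p}^0}$, because substituting $\lambda_a\mapsto \lambda_a^p$ coordinate-wise in a $\lambda$-linear action with $\Bbb F_p$-structure constants is exactly the action on $V_{\lambda^p}^0$. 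Assembling these three twists gives the formula of the proposition.

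The main obstacle is justifying rigorously the existence and regularity at $\bold c=0$, $\lambda=0$ of the limiting shift operators $A_w$ and infinitesimal shift operators $a_\lambda$ for the rescaled pencil: in \cite{EV} the trigonometric Dunkl shift operators are often constructed non-constructively (from periodicity of monodromy and regularity of $\nabla^{\rm trig}(\bold c)$), so a direct term-by-term passage to the limit may be subtle. The cleanest workaround is to bypass Theorem \ref{main3} and invoke instead the algebraic dependence of the $p$-curvature spectrum on $t$ stated at the start of Section 4: starting from the closed-form isospectral description of the trigonometric Dunkl pencil on $V_{\lambda/t}$ granted by Theorem \ref{main2}, pass to the limit $t\to 0$ directly in that formula to recover $C_i^*(\bold c,\lambda)(h)$.
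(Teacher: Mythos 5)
Your proposal is correct and your ``cleanest workaround'' is precisely the paper's route: the proposition is obtained by taking the $t\to 0$ confluent limit of the isospectral description of the trigonometric Dunkl pencil on $V_{\lambda/t}$ provided by Theorem~\ref{main2}, using the algebraic dependence of the $p$-curvature spectrum on $t$. The alternative you sketch first --- establishing mixed-periodicity and invoking Theorem~\ref{main3} --- is also valid and is mentioned in a remark immediately after the proposition, but it is not the proof the paper uses.

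Two small corrections to the first route, neither of which affects the outcome. First, you say the infinitesimal shift in $\lambda$ is ``bispectrally dual to the irregular KZ connection''; in fact the irregular Dunkl pencil is bispectrally \emph{self}-dual (the dual is the irregular Dunkl connection with $\lambda$ and $h$ swapped), whereas irregular KZ is dual to the irregular Casimir connection. Second, the concern you raise about non-constructive existence of the shift operators is less serious for Dunkl than you suggest: the (infinitesimal) shift operators here are the explicit Dunkl--Opdam and Opdam shift operators, as the paper points out in the remark following Proposition~\ref{dunpr}, so the regularity hypotheses of Theorem~\ref{main3} can be checked directly. Your three Frobenius-twist checks (i)--(iii) are all correct and match what the paper implicitly uses.
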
  

\begin{remark} As before, one can show that the irregular Dunkl connection is mixed-periodic (and satisfies the assumption of Theorem \ref{main3}). Namely, it is periodic in $\bold c$ and infinitesimally periodic in $\lambda$, and is bispectrally self-dual: 
the bispectrally dual connection is the irregular Dunkl connection with $\lambda$ and $h$ swapped. 
\end{remark} 

\begin{remark}\label{crg} The irregular Dunkl connection \eqref{dunkl} is a rare case of a nontrivial mixed-periodic pencil for which all the (infinitesimal) shift operators $A_j,a_k$ can be computed fairly explicitly. Namely, the irregular Dunkl connection  
can be described as the flat connection attached to the eigenvalue problem for the rational Calogero-Moser operators: 
$$
L_k(\bold c)\psi=\Lambda_k\psi,\ 1\le k\le r.
$$
In this realization the shift operators $A_j$ are the {\bf Dunkl-Opdam shift operators} $S_j(\bold c)$ (\cite{DO}, Subsection 3.5), which are explicit differential operators with rational coefficients which satisfy the identities 
$$
L_k(\bold c+\bold e_j)\circ S_j(\bold c)=S_j(\bold c)\circ L_k(\bold c)
$$
and hence map joint eigenfunctions of $L_k(\bold c)$ to joint eigenfunctions of $L_k(\bold c +\bold e_j)$, thus defining an isomorphism of the corresponding $D$-modules. 

Shift operators were generalized  to complex reflection groups by Berest and Chalykh (\cite{BC}), which can be used to prove an analog of Proposition \ref{dunpr} for all finite complex reflection groups (not just Weyl groups). One just needs to 
remember that reflection representations of complex reflection groups may be defined not over $\Bbb Q$ but over some number field $K$. 
This needs to be taken into account when computing Frobenius twists. 

The same holds for the trigonometric Dunkl connection (\cite{EV}, Subsection 5.2.2)
using its realization as the eigenvalue problem for trigonometric Calogero-Moser operators. Namely, the shift operators in this case are {\bf Opdam's shift operators} (\cite{O}).
\end{remark}

\subsection{Toda connections of finite type} \label{toda}
Toda connections are a special confluent limit of Dunkl-Cherednik connections  (see e.g. \cite{BMO}, Section 7). To explain this limit, for simplicity assume that the root system $R$ is irreducible and simply laced. 
Recall from \cite{EV}, Subsection 5.2.2 that in this case we have the degenerate affine Hecke algebra $\mathcal H_c$, $c\in \Bbb C$, generated by the Weyl group $W$ and $\mathfrak h$ 
with defining commutation relations
$$
s_i h-s_i(h)s_i=c\alpha_i(h),\ h\in \mathfrak h
$$
for simple reflections $s_i$ and simple roots $\alpha_i$, and for $\lambda\in \mathfrak h^*$ we define the induced representation $V_{c,\lambda}:=\mathcal H_c\otimes_{\rm Sym{\mathfrak h}}\Bbb C_\lambda$. Consider the torus $H=\mathfrak h/2\pi iP$ where $P$ is the (co)weight lattice. Recall that the Dunkl-Cherednik connection is the connection on the trivial bundle over the regular locus $H_{\rm reg}$ of $H$ with fiber $V_{c,\lambda}$ given by 
$$
\nabla(c,\lambda)=d-\sum_{i=1}^r \omega_i d\alpha_i-c\sum_{\alpha\in R_+}\frac{e^{\alpha}d\alpha}{1-e^{\alpha}}(s_\alpha-1), 
$$
where $\omega_i$ are the fundamental (co)weights.

In the Toda limit, the parameter $\lambda$ is kept fixed, while the parameter $c$ is sent to infinity. But in order for the limit to exist, we must also make a shift on $H$ and send it to infinity in a coordinated way with $c$, so the Weyl group symmetry is lost. To explain how exactly this works, introduce renormalized simple reflections $\overline s_i:=c^{-1}s_i$. 
They satisfy the defining relations 
$\overline s_i^2=c^{-2},\ \overline s_i h-s_i(h)\overline s_i=\alpha_i(h),\ h\in \mathfrak h $,
and the braid relations. Thus in the limit $c\to \infty$ these elements satisfy
the relations 
$$
\overline s_i^2=0,\ \overline s_i h- s_i(h)\overline s_i=\alpha_i(h), \ h\in \mathfrak h
$$
and the braid relations. These are defining relations of the 
{\bf nil-Hecke algebra} $\mathcal H_{\infty}$ of Kostant and Kumar. The representation $V_{c,\lambda}$ 
of $\mathcal H_c$ degenerates into the representation $V_{\infty,\lambda}:=\mathcal H_\infty\otimes_{\rm Sym{\mathfrak h}}\Bbb C_\lambda$ of $\mathcal H_\infty$.

Now we are ready to take the Toda limit. For this purpose, introduce
renormalized reflections $\overline s_\alpha=c^{-\ell(s_\alpha)}s_{\alpha}$, $\alpha\in R_+$, where $\ell$ denotes the length (these elements have nonzero limits in $\mathcal H_{\infty}$). In terms of these elements, we can write the Dunkl-Cherednik 
connection as
$$
\nabla(c,\lambda)=d-\sum_{i=1}^r \omega_i d\alpha_i-c\sum_{\alpha\in R_+}\frac{e^{\alpha}d\alpha}{1-e^{\alpha}}(c^{\ell(s_\alpha)}\overline{s_\alpha}-1), 
$$
We see that this has no limit as $c\to \infty$ unless we make a shift along $H$. 
So let us make the shift $g\mapsto gc^{-2\rho}$, $g\in H$. After this we have 
$$
\nabla(c,\lambda)=d-\sum_{i=1}^r \omega_i d\alpha_i-c\sum_{\alpha\in R_+}\frac{c^{-2|\alpha|} e^{\alpha}d\alpha}{1-c^{-2|\alpha|}e^{\alpha}}(c^{\ell(s_\alpha)}\overline{s_\alpha}-1),  
$$
where $|\alpha|$ is the height of $\alpha$. 
But it is known that $\ell(s_\alpha)=2|\alpha|-1$ 
(see \cite{BMO} and references therein), so we get 
$$
\nabla(c,\lambda)=d-\sum_{i=1}^r \omega_i d\alpha_i-\sum_{\alpha\in R_+}\frac{e^{\alpha}d\alpha}{1-c^{-2|\alpha|}e^{\alpha}}(\overline{s_\alpha}-c^{-\ell(s_\alpha)}).
$$ 
Now this has a finite limit as $c\to \infty$ given by 
$$
\nabla(\infty,\lambda)=d-\sum_{i=1}^r \omega_i d\alpha_i-\sum_{\alpha\in R_+}e^{\alpha}\overline{s_\alpha}d\alpha.
$$
This limit is the {\bf Toda connection} of the root system $R$. Its base 
is now the whole torus $H$ (the singularities disappeared in the limit), 
but it has irregular singularities at infinity.

Since $\nabla(c,\lambda)$ is periodic in $\lambda$, so is $\nabla(\infty,\lambda)$; 
in fact, as was shown by A. Givental, this connection arises as the equivariant quantum connection for the flag variety $G/B$ of the simple complex Lie group $G$ corresponding to $R$, so the shift operators for $\nabla(\infty,\lambda)$ can be constructed geometrically. Moreover, they can be computed explicitly either by taking the limit of Opdam shift operators, or from geometry. In fact, this was done in type $A$ in the more general case of partial flag varieties in \cite{TV}. 

However, in the terminology of \cite{EV}, $\nabla(\infty,\lambda)$ is not a pencil but rather an {\bf affine pencil}, in the sense that its dependence on $\lambda$ is linear, but inhomogeneous (owing to the last summand). This means that we cannot compute the spectrum of the $p$-curvature of $\nabla(\infty,\lambda)$ using Theorem \ref{main1}. 

But we can do this computation using the above limiting procedure and the knowledge of the spectrum of the $p$-curvature of the Dunkl-Cherednik connection. 
Namely, by Theorem \ref{main1}, the $p$-curvature operators $C_i(c,\lambda)$ corresponding to $\omega_i$ for the reduction of $\nabla(c,\lambda)$ at $p$
are isospectral to 
$$
C_i^*(c,\lambda)=-(\omega_i+(c^p-c)
\sum_{\alpha\in R_+}(\alpha,\omega_i)\tfrac{c^{-2p\rho}e^{p\alpha}}{1-c^{-2p\rho}e^{p\alpha}}(s_\alpha-1))\big|_{V_{c^p-c,\lambda^p-\lambda}}.
$$
So rewriting this in terms of $\overline s_i=\frac{s_i}{c^p-c}$ (as $c$ 
has been replaced by $c^p-c$) and taking the limit $c\to \infty$, we obtain the following proposition. 

\begin{proposition} The $p$-curvature operators $C_i(\infty,\lambda)(g)$ corresponding to $\omega_i$ for the reduction of $\nabla(\infty,\lambda)$ at $p$
are isospectral to 
$$
C_i^*(\infty,\lambda)(g)=-(\omega_i+
\sum_{\alpha\in R_+}(\alpha,\omega_i)\alpha(g)^p\overline s_\alpha)\big|_{V_{\infty,\lambda^p-\lambda}},\ g\in H. 
$$
\end{proposition}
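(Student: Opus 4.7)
The plan is to apply Theorem \ref{main1} to the periodic pencil $\nabla(c,\lambda)$ (the shifted Dunkl-Cherednik connection) and then transport the resulting isospectrality through the confluent limit $c\to\infty$ that defines $\nabla(\infty,\lambda)$. Theorem \ref{main1} gives, for every finite $c$, the displayed formula for $C_i^*(c,\lambda)$ isospectral to $C_i(c,\lambda)$. Since isospectrality amounts to equality of characteristic polynomials (Lemma \ref{isospeccrit}) and these depend algebraically on $c$, it suffices to match them at $c=\infty$.

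The main step is the computation of $\lim_{c\to\infty}C_i^*(c,\lambda)(g)$. I would rewrite $C_i^*(c,\lambda)$ using the normalized reflections $\overline{s}_\alpha=(c^p-c)^{-\ell(s_\alpha)}s_\alpha$, where the substitution $c\mapsto c^p-c$ is forced by the Frobenius twist and matches the algebra $\mathcal H_{c^p-c}$ acting on $V_{c^p-c,\lambda^p-\lambda}$. Using $\ell(s_\alpha)=2|\alpha|-1$ and the expansion
$$\frac{c^{-2p\rho}e^{p\alpha}}{1-c^{-2p\rho}e^{p\alpha}}=c^{-2p|\alpha|}e^{p\alpha}\bigl(1+O(c^{-2p})\bigr),$$
the $\overline{s}_\alpha$-coefficient in the $\alpha$-summand of $C_i^*(c,\lambda)(g)$ acquires the prefactor
$$(c^p-c)^{\ell(s_\alpha)+1}c^{-2p|\alpha|}=(1-c^{1-p})^{2|\alpha|}\longrightarrow 1,$$
giving a limit of $(\alpha,\omega_i)\,\alpha(g)^p\,\overline{s}_\alpha$, while the piece coming from the $-1$ in $(s_\alpha-1)$ is of order $c^{p(1-2|\alpha|)}\to 0$ for $|\alpha|\ge 1$. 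Since $V_{c^p-c,\lambda^p-\lambda}$ degenerates to $V_{\infty,\lambda^p-\lambda}$ exactly as $V_{c,\lambda}$ degenerates to $V_{\infty,\lambda}$, this yields the desired operator $C_i^*(\infty,\lambda)(g)$.

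To finish, I would argue that isospectrality survives the limit on the left-hand side. After the shift $g\mapsto gc^{-2\rho}$, the connection $\nabla(c,\lambda)$ converges to $\nabla(\infty,\lambda)$ with coefficients regular at $c=\infty$, so the matrix entries of $C_i(c,\lambda)(g)=\nabla_i(c,\lambda)^p$ depend regularly on $c^{-1}$ near infinity and specialize to those of $C_i(\infty,\lambda)(g)$. Equality of characteristic polynomials of $C_i(c,\lambda)(g)$ and $C_i^*(c,\lambda)(g)$ for all finite $c$ then forces equality at $c=\infty$, which by Lemma \ref{isospeccrit} is the proposition. The main obstacle is the bookkeeping of competing powers of $c$ from $c^p-c$, from the shift factor $c^{-2p\rho}$, and from the renormalization of reflections, together with the verification that the degenerations of the $\mathcal H_{c^p-c}$-modules and of the operators are compatible in the limit so that isospectrality genuinely transfers to the limiting affine pencil $\nabla(\infty,\lambda)$, which itself falls outside the hypothesis of Theorem \ref{main1}.
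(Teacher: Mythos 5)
Your proposal is correct and follows essentially the same route as the paper: apply Theorem \ref{main1} to the (shifted) Dunkl--Cherednik pencil, rewrite $C_i^*(c,\lambda)$ in terms of $\overline{s}_\alpha=(c^p-c)^{-\ell(s_\alpha)}s_\alpha$ with $\ell(s_\alpha)=2|\alpha|-1$ so that the competing powers of $c$ cancel to $(1-c^{1-p})^{2|\alpha|}\to 1$, and pass to the limit $c\to\infty$ using that the characteristic polynomials depend algebraically on $c$. Your computation of the prefactors and of the vanishing of the $-1$ term is exactly the bookkeeping the paper leaves implicit.
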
 

\begin{remark} There is a geometric explanation for this limiting process given in \cite{BMO}, Section 8: the parameter is given by the weight of a conical action on $T^*(G/B)$, and taking the above limit amounts to changing the deformation-obstruction theory on $T^*(G/B)$ used to define the (equivariant) Gromov-Witten invariants to that on the zero section $G/B$. 
  \end{remark}
  
\subsection{Spectrum of $p$-curvature of Dunkl connections and rational Cherednik algebras}\label{rca} Proposition \ref{dunpr} allows one to describe the eigenvalues of the $p$-curvature of $\nabla(\bold c,\lambda)$ a bit more explicitly, using the representation theory of classical rational Cherednik algebras.\footnote{For basics on rational Cherednik algebras, we refer the reader to \cite{EG,Et2,BR}.} Indeed, note that 
$$
C_i^*(\bold c,\lambda)(h)=-G_i(\bold c^p-\bold c,h^p,\lambda^p),
$$ 
where $G_i(\bold c,h,\lambda)$ are the {\bf Gaudin operators} for $W$ (\cite{BR}, Subsection 5.5.B), whose eigenvalues can be described using the techniques of \cite{BR}. 

Namely, let $H_{t,\bold c}$ be the {\bf rational Cherednik algebra} attached to $\mathfrak h,W$ with triangular decomposition $H_{t,\bold c}=S\mathfrak h^*\otimes \bold kW\otimes S\mathfrak h$ (\cite{EG}; \cite{BR}, Section 3). Let 
$$
M(\lambda):=H_{t,\bold c}/H_{t,\bold c}(\mathfrak h-\lambda(\mathfrak h))\cong S\mathfrak h^*\otimes \bold kW
$$ 
be the Whittaker module over $H_{t,\bold c}$ with highest weight $\lambda$. 
So in the {\bf classical case} $t=0$ (\cite{BR}, Section 4), the central subalgebra $(S\mathfrak h)^W\subset  H_{0,\bold c}$ acts on $M(\lambda)$ by the character $\lambda$. Given an element $h\in  \mathfrak h$, consider the space $M(h,\lambda):=M(\lambda)/(\mathfrak h^*-h(\mathfrak h^*))M(\lambda)\cong \bold kW$. Let $\delta\in S\mathfrak h^*$ be the discriminant of $W$ and 
recall the {\bf classical Dunkl operator representation} 
$\Theta: H_{0,\bold c}[\delta^{-1}]\cong \bold kW\ltimes \bold k[\mathfrak h_{\rm reg}\times\mathfrak h^*]$. For $h\in \mathfrak h_{\rm reg}$, 
the space $\Theta^{-1}(\mathfrak h)$ acts on $M(h,\lambda)$ 
by the Gaudin operators: $\omega_i^\vee\mapsto G_i(\bold c,h,\lambda)$ (\cite{BR}, Subsection 5.5). 

To compute the joint eigenvalues $\mu_j\in \mathfrak h^*$ of the Gaudin operators $(1\le j\le |W|)$, let $\overline h:=Wh$, and note that the space $M(\overline h,\lambda):=\oplus_{h\in \overline h}M(h,\lambda)$ is a left $H_{0,\bold c}$-module; indeed, it is the quotient of $M(\lambda)$ by the character $\overline h$ of the central subalgebra $(S\mathfrak{h}^*)^W=\bold k[\mathfrak h/W]$. Also let $P_k$ be independent homogeneous generators of the algebra $\bold k[\mathfrak h/W]$, and let $H_k\in \bold k[\mathfrak h_{\rm reg}\times\mathfrak h^*]$ be {\bf the classical Calogero-Moser hamiltonian} corresponding to $P_k$. Then by Lemma 2.2 of \cite{EFMV}, 
$\Theta(P_k)=H_k$, hence $P_k=\Theta^{-1}(H_k)$. 
Thus, comparing the actions of both sides of this equality on $M(\overline h,\lambda)$, we get 

\begin{proposition}\label{gau} The spectrum of the Gaudin operators on $M(h,\lambda)$ is the multiset of solutions $\mu$ of the equations
$$
P_k(\lambda)=H_k(\bold c,h,\mu), 1\le k\le r.
$$
Namely, for generic $h,\lambda$, the eigenvalues are distinct, while for special values of $h,\lambda$, some of the solutions coalesce and come with a multiplicity.
\end{proposition} 

Proposition \ref{gau} may be seen as a comparison of two constructions
of generic irreducible representations of $H_{0,\bold c}$. Namely, recall that $H_{t,\bold c}$ contains the Euler element $\bold h$ such that $[\bold h,W]=0$, $[\bold h,x]=tx,[\bold h,y]=-ty$ for $x\in \mathfrak{h}^*,y\in \mathfrak{h}$ (\cite{EG}; \cite{BR}, Subsection 3.3); so for $t=0$, $\bold h$ is central. By Subsection 10.1.D of \cite{BR}, for generic $h,\lambda$ the element $\bold h$ has $|W|$ 
distinct eigenvalues on $M(\overline h,\lambda)$, and 
the eigenspace $L(\overline h,\lambda,\beta)$ with each eigenvalue $\beta$ is an irreducible $H_{0,c}$-module of dimension $|W|$. Also, right multiplication by $w\in W$ defines an isomorphism $L(\overline h,\lambda,\beta)\cong L(\overline h,w\lambda,\beta)$, so $L(\overline h,\lambda,\beta)$ depends only on the $W$-orbit $\overline \lambda=W\lambda$. Moreover, this gives a bijective parametrization of generic irreducible $H_{0,c}$-modules by triples $(\overline h,\overline \lambda,\beta)$ (where $\beta$ takes $|W|$ values for generic $\overline h,\overline \lambda$). These statements all follow from the fact that $H_{0,\bold c}$ is finite over its center $Z_{0,\bold c}$ (a domain and a free module over its subalgebra $(S\mathfrak h^*)^W\otimes (S\mathfrak h)^W$ of rank $|W|$), and is generically an Azumaya algebra of degree $|W|$ over $Z_{0,\bold c}$ (\cite{EG}; \cite{BR}, Section 4).

On the other hand, using the isomorphism $\Theta$, we may 
parametrize generic irreducible $H_{0,c}$-modules by pairs 
$(h,\mu)\in \mathfrak h_{\rm reg}\times \mathfrak h^*$ modulo 
the diagonal action of $W$. We denote the corresponding irreducible induced module by $N(h,\mu)$. It is easy to see that $\bold h$ acts on $N(h,\mu)$
by the eigenvalue $\mu(h)$. 

Now we may ask when 
$N(h,\mu)\cong L(\overline h,\lambda,\beta)$. 
The answer to this question is provided by the following direct corollary of 
Proposition \ref{gau}. 

 \begin{corollary}\label{gau1} One has $N(h,\mu)\cong L(\overline h,\lambda,\beta)$ if and only if $\mu$ is a joint eigenvalue 
 of the Gaudin operators $G_i(\bold c,h,\lambda)$, i.e., a solution of the equations
$$
P_k(\lambda)=H_k(\bold c,h,\mu),\ 1\le k\le r,
$$
such that $\beta=\mu(h)$. 
\end{corollary} 

Finally, applying Proposition \ref{gau} to $p$-curvature, we obtain 

\begin{proposition} The spectrum of the $p$-curvature of $\nabla(\bold c,\lambda)(h)$ is the multiset of solutions $\mu$ of the equation 
$$
P_k(\lambda^p)=H_k(\bold c^p-\bold c,h^p,-\mu), 1\le k\le r,
$$
where each solution is counted with its multiplicity. 
\end{proposition}

In view of Remark \ref{crg}, these results extend mutatis mutandis to the case when $W$ is a complex reflection group. Also, they extend staightforwardly to the trigonometric case, for the Dunkl-Cherednik connections (\cite{EV}, Subsection 5.2.2); in this case $H_k$ are the trigonometric classical Calogero-Moser hamiltonians.\footnote{More precisely, for this we need to extend Lemma 2.2 of \cite{EFMV} to the trigonometric case, i.e., show that $\Theta(P_k)$ is a function (has no 
nontrivial elements of $W$). To do so, note that $P_k$ 
is central in the degenerate affine Hecke algebra generated by 
$\mathfrak h$ and $W$, hence $\Theta(P_k)$ commutes with 
$W$ and functions on $\mathfrak h^*$. But this implies that 
$\Theta(P_k)$ contains no nontrivial elements of $W$.} Finally, by taking a limit from the trigonometric case, they 
extend to the case of Toda connections described in Subsection \ref{toda}, 
with $H_k$ being the classical Toda hamiltonians. In this case, the role of 
the Cherednik algebra is played by the degenerate nil-DAHA (\cite{G}).

\begin{example} Let $W=S_n$. In this case $\mathfrak h\cong \mathfrak h^*$ is the space of vectors $(x_1,...,x_n)$ such that $\sum_i x_i=0$ and $\bold c$ is a single parameter $c$. Recall that Moser's {\bf Lax  matrix} of the classical rational Calogero-Moser system 
is 
$$
\Bbb L(h,\mu)=\sum_{j=1}^n \mu_jE_{jj}+\sum_{i\ne j}\frac{c}{x_i-x_j}E_{ij},
$$
where $h=(x_1,...,x_n)$, $\mu=(\mu_1,...,\mu_n)$, and $E_{ij}$ are elementary matrices (see e.g. \cite{Et2}, Proposition 2.6). 
The hamiltonian $H_k$ of the Calogero-Moser system 
attached to the elementary symmetric function $e_k$ 
is then $H_k(h,\mu)={\rm Tr}\wedge^k \Bbb L(h,\mu)$ (\cite{Et2}, Subsection 2.7), so the equations 
of Corollary \ref{gau1} take the form 
$$
e_k(\lambda)={\rm Tr}\wedge^k \Bbb L(h,\mu),\ 2\le k\le n.
$$
For example, for $n=2$ we have $h=\frac{1}{2}(x,-x)$, $\mu=(p,-p)$, 
$\lambda=(y,-y)$, so we get the equation 
$$
y^2=p^2-\frac{c^2}{x^2}, 
$$
thus the eigenvalues of the Gaudin operator $G$ are $p=\pm \sqrt{y^2+\frac{c^2}{x^2}}$, which is also easy to check directly, since 
$G=\begin{pmatrix} y & \frac{c}{x}\\ \frac{c}{x} & -y\end{pmatrix}$. 

For $n=3$, we have $\sum_j x_j=\sum_j \lambda_j=\sum_j \mu_j=0$ 
and $\mu_j$ are determined for given $x_j,\lambda_j$ from the equations
$$
\lambda_1\lambda_2+\lambda_1\lambda_3+\lambda_2\lambda_3=\mu_1\mu_2+\mu_1\mu_3+\mu_2\mu_3
+\frac{c^2}{(x_2-x_3)^2}+\frac{c^2}{(x_3-x_1)^2}+\frac{c^2}{(x_1-x_2)^2},
$$
$$
\lambda_1\lambda_2\lambda_3=\mu_1\mu_2\mu_3+\frac{c^2\mu_1}{(x_2-x_3)^2}+\frac{c^2\mu_2}{(x_3-x_1)^2}+\frac{c^2\mu_3}{(x_1-x_2)^2}.
$$
This system reduces to a degree $6$ polynomial equation in one variable over the field $\bold k(\lambda,x)$ whose Galois group is $S_6$ (so the degree $6$ field extension defined by this polynomial is very far from being Galois). 

The case when $W$ is a general dihedral group $I_m$ is discussed in detail in \cite{B}. In this case, one gets a system of two equations for $\mu$, one of degree $2$ and another of degree $m$, which reduces to an equation in  one variable of degree $2m$, with maximal possible Galois group $S_{2m}$ if $m$ is odd (\cite{B}, Theorem 6.1). The above example of type $A_2$ corresponds to $m=3$. 
\end{example}

Let us now describe the relation of the spectrum of $p$-curvature of irregular Dunkl connections to representation theory of quantum rational Cherednik algebras $H_{1,\bold c}$ in characteristic $p$, which arises from comparison of two constructions of generic irreducible representations of $H_{1,\bold c}$. 
Similarly to the case of $H_{0,\bold c}$, define the left $H_{1,\bold c}$-module 
$M(\overline h,\lambda)$ to be the quotient of the Whittaker module $M(\lambda)$ by the character $\overline h$ of the central subalgebra 
$((S\mathfrak h)^W)^p\subset H_{1,\bold c}$. We also have the central element 
$\bold h^{(1)}:=\bold h^p-\bold h\in H_{1,\bold c}$, and it is shown similarly to 
\cite{BR}, Subsection 10.1.D that it has $|W|$ distinct eigenvalues on 
 $M(\overline h,\lambda)$, and for each eigenvalue $\beta$ 
 the corresponding eigenspace $L(\overline h,\lambda,\beta)$ is an 
 irreducible $H_{1,\bold c}$-module, now of dimension $p^r|W|$.
 As before, $L(\overline h,\lambda,\beta)$ depends only on $\overline \lambda=W\lambda$, and this provides a bijective parametrization of generic irreducible modules by triples $(\overline h,\overline \lambda,\beta)$. All this follows from the facts that $H_{1,\bold c}$ is finite over its center $Z_{1,\bold c}$ (a domain and a free module over its subalgebra $((S\mathfrak h^*)^W)^p\otimes ((S\mathfrak h)^W)^p$ of rank $|W|$), and is generically an Azumaya algebra of degree $p^r|W|$ over $Z_{1,\bold c}$ (see e.g. \cite{Et,BrC}). 
 
On the other hand, recall the {\bf quantum Dunkl operator representation} 
$$
\Theta: H_{1,\bold c}[\delta^{-1}]\cong \bold kW\ltimes D(\mathfrak h_{\rm reg}),$$ 
where $D(\mathfrak h_{\rm reg})$ is the algebra of differential operators on 
$\mathfrak h_{\rm reg}$. For $\mu\in \mathfrak{h}^*$ we have the $D(\mathfrak{h}_{\rm reg})$-module 
$$
\bold k_\mu[\mathfrak{h}_{\rm reg}]:=
D(\mathfrak{h}_{\rm reg})/D(\mathfrak{h}_{\rm reg})(\partial_y-\mu(y),y\in \mathfrak{h}).
$$
Thus we can consider the induced $W\ltimes D(\mathfrak{h}_{\rm reg})$-module $\bold kW\otimes \bold k_\mu[\mathfrak{h}_{\rm reg}]$ and the restriction to $H_{1,\bold c}$ of its pullback via $\Theta$, which we denote by $N(\mu)$. Note that $N(\mu)$ is a free $\bold k[\mathfrak{h}_{\rm reg}/W]^p$-module of rank $p^r|W|^2$, and the action of $H_{1,\bold c}$ is linear over this algebra. Thus for every $\overline h\in \mathfrak{h}_{\rm reg}/W$, we have the fiber $N(\overline h,\mu)$ of $N(\mu)$ at $\overline h$, which is a representation of $H_{1,\bold c}$ of dimension $|W|^2p^r$. 
Moreover, it is easy to see that $\bold h^{(1)}$ acts on $N(\overline h,\mu)$ with eigenvalues $\mu(h)^p$, $h\in \overline h$, so we can define $N(h,\mu)$ 
as the $\mu(h)^p$-eigenspace of $\bold h^{(1)}$ in $N(\overline h,\mu)$. Then generically $N(h,\mu)$ is irreducible, $N(h,\mu)\cong N(wh,w\mu)$ 
for $w\in W$, and every sufficiently generic irreducible representation 
of $H_{1,\bold c}$ is isomorphic to $N(h,\mu)$ for a unique pair $(h,\mu)$ 
up to diagonal action of $W$. 

So, as before, we may ask when $N(h,\mu)\cong L(\overline h,\lambda,\beta)$ 
The answer to this question is provided by the following proposition, which follows from Proposition \ref{dunpr}.

\begin{proposition} We have $N(h,\mu)=L(\overline h,\lambda,\beta)$ 
if and only if $-\mu^p$ is an eigenvalue of the 
$p$-curvature $C(\bold c,\lambda)(h)$ 
of the connection $\nabla(\bold c,\lambda)$ 
at $h$, i.e., 
$$
P_k(\lambda^p)=H_k(\bold c^p-\bold c,h^p,\mu^p), 1\le k\le r,
$$
and $\beta=\mu(h)^p$. 
\end{proposition} 

Moreover, similarly to Subsection \ref{rca}, this result extends to the case of complex reflection groups, the trigonometric case, and the Toda case. 
  
 \section{Going beyond pencils}
 
Unfortunately, the pencil condition (that the connection form is linear homogeneous in the parameters) used in the proof of Theorem \ref{main1} is quite restrictive, even though it allows many interesting examples. This motivated us in \cite{EV}, Subsection 4.8 to generalize the theory 
of periodic pencils to {\it pseudo-pencils}, which is a much wider class of families of flat connections. Namely, recall (\cite{EV}, Definition 4.29) that a 1-parameter polynomial family of flat connections $\nabla(s)=d-B(s)$ on $X$ is a {\bf pseudo-pencil} if it is gauge equivalent 
over $\bold k(s)(X)=\bold k(X)(s)$ (or, equivalently, over $\bold k(X)((s))$) to a family of the form $d-sB_\infty(s)$, where $B_\infty(s)$ is regular at $s=\infty$. It is shown in \cite{EV}, Subsection 4.8 that periodic pseudo-pencils share many of the remarkable properties of periodic pencils, while including many more examples, e.g. ones coming from Hodge theory. In this section we generalize Theorem \ref{main1} to pseudo-pencils (satisfying some additional conditions).\footnote{In \cite{EV}, Subsection 4.8 we actually considered the more general case of multiparameter pseudo-pencils, but here for simplicity we will restrict ourselves to 1-parameter ones.}  

\subsection{Trivializability at $s=0$} 
Let $X$ be a smooth variety over a field $\bold k$ (for simplicity, 1-dimensional, i.e., a curve, but this assumption is not essential). Let $\nabla(s)$ be a polynomial family of (flat) connections on $X$.

\begin{definition} $\nabla(s)$ is {\it trivializable at $0$} if there exists a gauge transformation over $\bold k(X)(s)$ (or, equivalently, $\bold k(X)((s))$, i.e., allowing poles at $s=0$) which transforms $\nabla(s)$ to the form $\nabla_*(s)=d-sB_0(s)$, where $B_0(s)$ is regular at $s=0$ (i.e., $\nabla_*(0)$ is trivial). 
\end{definition}

For example, if $\nabla(s)$ is a pencil, it is automatically trivializable (in fact, trivial) at $0$. 

\begin{proposition} A connection $\nabla_0$ on $X$ admits a  
trivializable at $0$ deformation $\nabla(s)$ if and only if 
$\nabla_0$ is unipotent (i.e., admits an invariant filtration with associated graded the trivial connection).
\end{proposition} 

\begin{proof} If $\nabla_0$ is unipotent then 
$\nabla_0=d-B$, where $B$ 
is strictly upper triangular in some basis. 
Then conjugation of $\nabla_0$ by ${\rm diag}(1,s,...,s^{N-1})$ (in this basis), brings it to the form $d-sB_0(s)$ for $B_0(s)$ regular at $s=0$. It follows that $\nabla(s):=\nabla_0$ (constant family) is trivializable at $0$. 

Conversely, if $\nabla(s)$ is trivializable at $0$ 
then $\nabla_0=\nabla(0)$ is unipotent, since the composition series 
of $\nabla(0)$ does not change under gauge transformations 
of $\nabla(s)$ over $\bold k(X)((s))$ (even though its isomorphism 
class might change). 
\end{proof} 

Assume now that $\bold k$ is of characteristic $p$. 

\begin{proposition}\label{p2} The following conditions are equivalent: 

(i) $\nabla(s)$ is trivializable at $0$. 

(ii) The $p$-curvature $C(s)$ of $\nabla(s)$ is conjugate over $\bold k(X)((s))$ to 
$C_*(s)\in s\cdot {\rm Mat}_N(\bold k(X)[[s]])$. 

(iii) The characteristic polynomial of $C(s)/s$ has coefficients in $\bold k(X)[[s]]$. 
\end{proposition} 
 
\begin{proof} We will use the following lemma. 
 
\begin{lemma}\label{l1} Let  $K$ be a field and $A\in {\rm Mat}_N(K((s)))$ 
be such that the coefficients of the characteristic polynomial $\chi_A$ of $A$ are in $K[[s]]$. Then $A$ is conjugate by an element of $GL_N(K((s)))$ to an element of ${\rm Mat}_N(K[[s]])$. 
\end{lemma} 

\begin{proof}
By the Cayley-Hamilton theorem we have $\chi_A(A)=0$, so $A$ is integral over $K[[s]]$. Thus there is a $K[[s]]$-lattice $L$
in  $K((s))^N$ which is $A$-stable. Writing $A$ in a basis of this lattice, we obtain a matrix conjugate to $A$ which has entries in $K[[s]]$. 
\end{proof} 

Now we are ready to prove the proposition. The $p$-curvature of $\nabla(s)$ is conjugate to the $p$-curvature of $\nabla_*(s)$, so 
(i) implies (iii). The equivalence of (iii) and (ii) follows from Lemma \ref{l1}. So it remains to show that (iii) 
implies (i). Consider the standard $\bold k(X)[[s]]$-lattice $L=\bold k(X)[[s]]^N$ for $\nabla(s)$, and let $M$ be the submodule generated inside $\bold k(X)((s))^N$ by $(C(s)/s)^jL$, $j\ge 0$. By assumption and the Cayley-Hamilton theorem, $M$ is a finitely generated $\bold k(X)[[s]]$-module. 
Thus it is a lattice in $\bold k(X)((s))^N$. Moreover, $M$ is invariant under $\nabla(s)$ since $\nabla(s)$ commutes with $C(s)$. Choosing a basis of $M$, we obtain a gauge in which $\nabla(s)$ and $C(s)/s$ are regular. 
But then $C(0)=0$. So, in a suitable gauge (regular at $s=0$), we have $\nabla(0)=d$, hence $\nabla(s)=d-sB_0(s)$, proving (i). 
\end{proof} 

\begin{remark} $\nabla(s)$ may fail to be trivializable at $0$ even if $\nabla_0$ is unipotent.
 For example, the connection 
$$
\nabla(s)=d-\frac{1}{x}\begin{pmatrix} 0& 1\\ s& 0 \end{pmatrix}
$$
is not trivializable at $0$. Indeed, for $p$ odd the $p$-curvature of $\nabla(s)$ is
$$
C(s)=\frac{1}{x^p}\begin{pmatrix} 0& 1-s^{\frac{p-1}{2}}\\ s-s^{\frac{p+1}{2}}& 0 \end{pmatrix}, 
$$ 
so ${\rm Tr}\wedge^2 C(s)=-\frac{s}{x^{2p}}+O(s^2)$, and 
for $p=2$ it is $C(s)=\frac{1}{x^2}\begin{pmatrix} s& 1\\ s& s \end{pmatrix}$, 
so ${\rm Tr}\wedge^2 C(s)=\frac{s+s^2}{x^4}$, violating condition (iii) in  Proposition \ref{p2}. This also implies that $\nabla(s)$ 
is not trivializable at $0$ in characteristic $0$. 
\end{remark} 

\begin{definition} Let $\nabla_0$ be a connection on $X$ over a field $\bold k$ of characteristic zero, and let
 $\nabla(s)$ be a polynomial deformation of $\nabla_0$. We call this deformation {\bf globally trivializable at $0$} if it is trivializable at $0$ upon reduction modulo almost all primes. 
\end{definition} 

\begin{remark} Note that if $\nabla(s)$ is globally trivializable at $0$ then 
$\nabla_0$ need not be unipotent (in fact, it can be irreducible of arbitrary rank). Indeed, if $\nabla(s)$ is a periodic pencil then $\nabla(s+b)$ is globally trivializable at $0$ for any $b\in \Bbb Q$. 
Namely, for large enough $p$, $b$ is represented by an integer $0\le b_p\le p-1$, and the gauge transformation trivializing $\nabla(s+b)=\nabla(s+b_p)$ at $0$ in characteristic $p$ is the product of $b_p$ shift operators which conjugates $\nabla(s+b_p)$ to $\nabla(s)$. In fact, this argument shows that the statement holds more generally, for 
any periodic polynomial family $\nabla(s)$ trivializable at $0$. 

In particular, we see that trivializability at $0$ implies global trivializability at $0$ but not vice versa.  
\end{remark} 

\subsection{Periodic pseudo-pencils} Now let $\bold k$ be of characterisic $p$. 
Let $\nabla(s)$ be a periodic 1-parameter family of (flat) connections on $X$. 

The following theorem is a generalization of Theorem \ref{main1}.

\begin{theorem} If $\nabla(s)$ is a pseudo-pencil trivializable at $0$ then its $p$-curvature is isospectral to $(s-s^p)B_\infty^{(1)}$ and also to $-(s-s^p)\partial^{p-1}B_0$. In particular, $B_\infty^{(1)}$ is isospectral to $-\partial^{p-1}B_0$.\footnote{For example, if $\partial-sB$ is a periodic pencil then 
$B^{(1)}$ is isospectral to $-\partial^{p-1}B$, a nontrivial condition on the matrix-valued 1-form $B$. E.g., if $B$ has generically simple spectrum then 
this condition says that there exists $g\in GL(V)(\bold k(X))$ such that 
$-\partial^{p-1}B=(gBg^{-1})^{(1)}$. By the Lang-Steinberg theorem, there exists 
$h\in GL(V)(\overline{\bold k(X)})$ such that $g=h^{-1}h^{(1)}$. Then 
$\overline B:=h^{(1)}B(h^{(1)})^{-1}$ satisfies the equation $\overline B^{(1)}=-\partial^{p-1}\overline B$, which is equivalent to saying that $B$ has simple 
poles and residues having entries in $\Bbb F_p$.}
\end{theorem} 

\begin{proof} Let $C(s)$ be the $p$-curvature of $\nabla(s)$. This is a polynomial of $s$, 
so 
$$
c_i(s):={\rm Tr}\wedge^i C(s),\ c_i\in \bold k[s].
$$ 
By the pseudo-pencil condition, $\deg c_i\le pi$. 
By periodicity, $c_i(s)=\gamma_i(s-s^p)$, $\gamma_i\in \bold k[s]$, where $\deg(\gamma_i)\le i$. By 
trivializability at $0$, $\gamma_i(u)$ is divisible by $u^i$. Thus $c_i(s)=c_i^0 (s-s^p)^i$, $c_i^0\in \bold k$. So by sending $s$ to infinity we find that $C(s)$ is isospectral to $(s-s^p)B_\infty^{(1)}$ and by sending $s$ to $0$ we find that $C(s)$ is isospectral to $-(s-s^p)\partial^{p-1}B_0$. 
\end{proof} 

\begin{corollary}\label{glotri} If $\nabla(s)=d-B(s)$ is a pseudo-pencil over a field of characteristic $0$ that is globally trivializable at $0$ then for almost all $p$ the $p$-curvature of its reduction to characteristic $p$ is isospectral to $(s-s^p)B_\infty^{(1)}$ and also to $-(s-s^p)\partial^{p-1}B_0$.
\end{corollary} 

\subsection{A question} 

Let $\pi: Y\to X$ be a smooth morphism 
between smooth complex algebraic varieties, and 
let $f: Y\to \Bbb A^1\setminus 0$ be a regular function (say, all defined over $\overline{\Bbb Q}$). For $s\in \Bbb C$, 
consider the generalized Gauss-Manin connection $\nabla(s)$ on $H^i(\pi_*f^s)$ 
(modulo $s$-torsion) for some $i$, where $f^s$ denotes the de Rham local system generated by the multivalued function $f^s$ (i.e., a one-parameter motivic family). As noted in Subsection \ref{motfam}, $\nabla(s)$ is a periodic family: $\nabla(s+1)$ is conjugate to $\nabla(s)$. 

Corollary \ref{glotri} motivates the following question. 

\begin{question} \label{que}
1. Is $\nabla(s)$ a pseudo-pencil? 

2. Is $\nabla(s)$ globally trivializable at $0$? (equivalently, is this so for  
$\nabla(s+r)$, $r\in \Bbb Q$?) 
\end{question} 

Note that a positive answer to Question \ref{que}(2) would be a generalization of 
Katz's theorem (Theorem \ref{katzth}) to the case of generalized Gauss-Manin connections. 

If the general answer is ``no", it would be interesting to find nice sufficient conditions for the answer to be ``yes", so that one can apply Corollary \ref{glotri}.

\section{Periodic families of additive and multiplicative difference connections} 

\subsection{Difference connections} 
In this section we would like to generalize the theory of periodic families of connections 
to difference connections (additive and multiplicative). The variety $X$ for these generalizations 
will be the affine space $\bold k^r$ over $\bold k$ with coordinates $x_1,...,x_r$, and we will allow connection forms with poles. 

Let $V$ be a finite dimensional $\bold k$-vector space. Define the additive translation operators $T_i$ by
$$
(T_if)(x_1,...,x_i,...,x_r):=f(x_1,...,x_i+1,...,x_r),
$$
or $(T_if)(\bold x)=f(\bold x+\bold e_i)$, 
and for $\mathbbm{q}\in \bold k^\times$ the multiplicative translation operators
$\bold T_{i,\mathbbm{q}}=\bold T_i$ by
$$
(\bold T_if)(x_1,...,x_i,...,x_r):=f(x_1,...,\mathbbm{q} x_i,...,x_r)
$$
or $(\bold T_if)(\bold x)=f(\mathbbm{q}^{\bold e_i}\bold x)$ (where $\mathbbm{q}^{(m_1,...,m_r)}:=(\mathbbm{q}^{m_1},...,\mathbbm{q}^{m_r})$), 
acting on $\bold k(X)$. 

\begin{definition} An {\bf additive difference connection} $\nabla$ on $X$ with values in $V$ 
is a collection of difference operators  
$$
\nabla_i=T_i^{-1}B_i,
$$
where $B_i: \bold k^r\to GL(V)$ are rational functions. 
Similarly, a {\bf multiplicative $\mathbbm{q}$-difference connection} $\nabla$ on $X$ with values in $V$ 
is a collection of multiplicative $\mathbbm{q}$-difference operators  
$$
\nabla_i=\bold T_i^{-1}B_i.
$$
Such a connection $\nabla$ is said to be {\bf flat} if 
$[\nabla_i,\nabla_j]=0$. If $\bold k=\Bbb C$, a
{\bf flat meromorphic section} of $\nabla$ 
over an open subset $U\subset \Bbb C^r$ invariant under 
$x_i\mapsto x_i+1$, respectively $x_i\mapsto \mathbbm{q} x_i$ is a meromorphic function 
$f: U\to V$ such that $\nabla_i f=f$, i.e., $T_if=B_if$, respectively 
$\bold T_if=B_if$. 
\end{definition} 

\subsection{$p$-curvature of difference connections} 
The notion of {\bf $p$-curvature} makes sense for flat additive difference connections if 
${\rm char}(\bold k)=p$, and for flat multiplicative $\mathbbm{q}$-difference connections 
if $\mathbbm{q}$ is a primitive $p$-th root of $1$, where $p\ge 2$ is an integer (so in any characteristic not dividing $p$). Namely, in these cases we make

\begin{definition} The $p$-curvature of $\nabla$ is the collection 
of commuting operators $C_i:=\nabla_i^p$. 
\end{definition} 

These operators are just $\bold k(X)$-linear automorphisms 
of $\bold k(X)\otimes V$: namely, in the additive case 
$$
C_i(\bold x)=\prod_{j=p-1}^{0} B_i(\bold x+j\bold e_i)=B_i(\bold x+(p-1)\bold e_j)...B_i(\bold x+\bold e_i)B_i(\bold x), 
$$
and in the multiplicative case 
$$
C_i(\bold x)=\prod_{j=p-1}^{0} B_i(\mathbbm{q}^{j\bold e_i}\bold x)=B_i(\mathbbm{q}^{(p-1)\bold e_i}\bold x)...B_i(\mathbbm{q}^{\bold e_i}\bold x)B_i(\bold x).
$$

When a flat additive difference connection $\nabla$ degenerates to an ordinary connection $\nabla^0$, the $p$-curvature of $\nabla$ degenerates to the $p$-curvature of $\nabla^0$. For example, 
consider a one-dimensional additive difference connection $\nabla=T^{-1}B$. We may rescale the 
coordinate $x$ by $t$, then we obtain the operator 
$$
(\nabla_t f)(x)=B(\tfrac{x-t}{t})f(x-t).
$$
Assume that $B(\frac{x-t}{t})=1+tb(x)+O(t^2)$, then we have 
$$
(1-\nabla_t)f=t(\partial-b)f+O(t^2), 
$$
so if $C_t=\nabla_t^p$ then 
$$
(1-C_t)f=(1-\nabla_t)^pf=t^p(\partial-b)^pf+O(t^{p+1}).
$$
Thus if $C^0$ is the $p$-curvature of the ordinary connection  
$\nabla^0:=\partial-b=\lim_{t\to 0}\frac{1-\nabla_t}{t}$ then 
$$
C^0=\lim_{t\to 0}\frac{1-C_t}{t^p}.
$$
Similarly, degeneration of a multiplicative $\mathbbm{q}$-difference connection in characteristic zero to a usual one (accompanied by reduction to characteristic $p$) preserves the $p$-curvature. Even more obviously, degeneration of a multiplicative $\mathbbm{q}$-difference connection in characteristic zero to an additive one (accompanied by reduction to characteristic $p$) also preserves the $p$-curvature. 

\subsection{Periodic families of difference connections} 

We now want to generalize the notion of a periodic family of flat connections to the difference case. Motivated by applications, we will allow both additive and multiplicative parameters (but additive parameters will only appear for additive difference connections). 

\begin{definition} 
An {\bf polynomial family} of flat additive (respectively, multiplicative) difference connections on $X$ with values in $V$ is a family of difference connections $\nabla(\bold s,\bold t)=(\nabla_i(\bold s,\bold t))$ such that $\nabla_i^{\pm 1}$ are polynomials in $\bold s\in \bold k^n$ and Laurent polynomials in $\bold t\in (\bold k^\times)^m$. 
\end{definition}

\begin{definition} A family $\nabla(\bold s,\bold t)$ of additive difference connections is {\bf (mixed-)periodic} if there exist operators 
$A_j\in GL(V)(\bold k(\bold s,\bold t)(X))$, $1\le j\le n$ such that 
$$
\nabla(\bold s+\bold e_j,\bold t)\circ A_j(\bold s,\bold t)=A_j(\bold s,\bold t)\circ \nabla(\bold s,\bold t),\ 1\le j\le n
$$
and $a_j\in {\rm End}V\otimes \kk(\bold s,\bold t)(X),\ n+1\le j\le n+m$
such that
$$
[t_j\partial_{t_j}-a_j(\bold s,\bold t),\nabla(\bold s,\bold t)]=0.
$$
\end{definition}  

\begin{definition} A family $\nabla(\bold t)$ of multiplicative 
$\mathbbm{q}$-difference connections is {\bf periodic} if there exist operators $A_j\in GL(V)(\bold k(\bold t)(X))$, $1\le j\le m$ such that 
$$
\nabla(\mathbbm{q}^{\bold e_j}\bold t)\circ A_j(\bold t)=A_j(\bold t)\circ \nabla(\bold t),\ 1\le j\le m.
$$
\end{definition}  

It is clear that the $p$-curvature $C(\bold s,\bold t)$ of a periodic family 
of additive difference connections is conjugate, for generic $\bold s,\bold t$, to $C(\bold s+\bold e_j,\bold t)$, $1\le j\le n$, so the coefficients 
of the characteristic polynomial of any Laurent polynomial of $C_i^{\pm 1}$ are functions of $s_j-s_j^p$, $1\le j\le n$. Likewise, the existence of the operators $a_j$ for $n+1\le j\le n+m$ imply that these coefficients 
have zero derivatives with respect to $t_j$, so they are functions 
of $t_j^p$, $n+1\le j\le n+m$. Similarly, the $p$-curvature $C(\bold t)$ of a periodic family of multiplicative $\mathbbm{q}$-difference connections is conjugate, for generic $\bold t$, to $C(\mathbbm{q}^{\bold e_j}\bold t)$, $1\le j\le n$, so the coefficients of the characteristic polynomial of any Laurent polynomial of $C_i^{\pm 1}$ are functions of $t_j^p$, $1\le j\le n$.
 
Many examples of periodic families come from enumerative geometry. Namely, periodic families of additive difference connections arise as bispectrally dual equations to quantum connections on quantum cohomology of conical symplectic resolutions $\mathcal X$ (i.e., equations defined by shift operators), see \cite{MO}. Namely, in this case the multiplicative parameters $t_j$ are the coordinates on the torus $H^2(\mathcal X,\Bbb C^\times)$ (and the operators $t_j\partial_{t_j}-a_j$ are components of the quantum connection) and the additive parameter $\hbar$ is the equivariant parameter for the copy of $\Bbb C^\times$ dilating the cone $\overline{\mathcal X}$ resolved by $\mathcal X$. Similarly, periodic families of multiplicative difference connections arise as quantum connections on the quantum K-theory of such resolutions, as well as as bispectrally dual equations to them (which in many interesting cases are quantum connections on the quantum K-theory of mirror dual varieties), see \cite{OS}. In all these cases, the operators $A_i$ arise as geometric shift operators in quantum cohomology of K-theory (cf. \cite{EV}, Section 3.5). Examples of this type are considered in \cite{S,KS} and their $p$-curvature 
is treated in \cite{KS}.

\subsection{Periodic families and connection matrices} 

In \cite{EV} we showed that for families of regular differential connections over $\Bbb C$, a useful criterion for periodicity is the periodicity of the monodromy representation. We would now like to extend this to difference connections. For simplicity we will focus on 1-dimensional connections with one parameter $q$, and consider only multiplicative difference connections; a similar extension can be made 
in higher dimensions, for multiple parameters, and in the additive case.   

It is well known that for regular difference equations, the role of monodromy is played by the connection matrices, so let us recall 
their theory (see e.g. \cite{EFK}, Subsection 12.1 for a review). 

Let $\hbar\in \Bbb C$, ${\rm Re}\hbar>0$, and let 
$\mathbbm{q}:=e^{-\hbar}$. For a number or an operator $M$, 
$\mathbbm{q}^M$ will stand for $e^{-\hbar M}$. 
Consider the $\mathbbm{q}$-difference equation 
\begin{equation}\label{dieq}
F(\mathbbm{q}z)=B(z)F(z), 
\end{equation}
where $B\in GL_N(\Bbb C(z))$. One says that equation \eqref{dieq} is {\bf regular} if there exist limits $B_0=\mathbbm{q}^{M_0},B_\infty=\mathbbm{q}^{M_\infty}$ of 
$B(z)$ as $z\to 0,\infty$, for some matrices $M_0,M_\infty$. Let us additionally assume that the ratios of 
eigenvalues of $B_0$ are not equal to a nonzero integer power of $\mathbbm{q}$, and the same for $B_\infty$. 

In this case there exist unique matrix solutions of \eqref{dieq}
$$
F_0(z)=F_{0*}(z)z^{M_0},\ F_\infty(z)=F_{\infty*}(z^{-1})z^{M_\infty},
$$
where $F_{0*},F_{\infty*}$ are meromorphic on $\Bbb C$ with $F_{0*}(0)=F_{\infty*}(\infty)=1$, called {\bf asymptotic solutions} (so the functions $F_0,F_\infty$ may branch around $0$). The {\bf connection matrix} of \eqref{dieq} is then defined by 
$$
\bold C(z):=F_\infty(z)^{-1}F_0(z).
$$
It is easy to see that 
$$
\bold C(z)=\bold C(\mathbbm{q}z).
$$
Thus if $M_0,M_\infty$ are diagonalizable
then the matrix entries of $\bold C(z)$, as functions of $z$,  
express via theta-functions $\theta(z,\mathbbm{q})$. 
 
Now consider a family of $\mathbbm{q}$-difference connections
$\nabla(\bold t)=\bold T^{-1}B(\bold t,z)$, where $(\bold Tf)(z)=f(\mathbbm{q}z)$, and let 
$\bold C(\bold t,z)$ be the connection matrix of the corresponding difference equation $F(\mathbbm{q}z)=B(\bold t,z)F(z)$. Recall that this family is periodic if the connection $\nabla(\mathbbm{q}^{\bold e_i}\bold t)$ is gauge equivalent to 
$\nabla(\bold t)$ (for generic parameters). 

Denote by $Z_0,Z_\infty$ the centralizers of $M_0,M_\infty$ in $GL_N(\Bbb C)$. 

\begin{theorem}\label{perio} Suppose that for all $1\le i\le m$ there exist $g_i(\bold t)\in Z_0$, $h_i(\bold t)\in Z_\infty$ such that $\bold C(\mathbbm{q}^{\bold e_i}\bold t,z)= h_i(\bold t)\bold C(\bold t,z)g_i(\bold t)^{-1}$. Then the family $\nabla(\bold t)$ is periodic. 
\end{theorem} 

\begin{proof} Let $F_0(\bold t,z)$, $F_\infty(\bold t,z)$ be the corresponding 
asymptotic solutions. Then  
$$
F_0(\bold t,z)=F_\infty(\bold t,z)\bold C(\bold t,z),\ F_0(\mathbbm{q}^{\bold e_i}\bold t,z)g_i(\bold t)=F_\infty(\mathbbm{q}^{\bold e_i}\bold t,z)h_i(\bold t)\bold C(\bold t,z). 
$$
Thus 
$$
F_0(\mathbbm{q}^{\bold e_i}\bold t,z)g_i(\bold t)F_0(\bold t,z)^{-1}=F_\infty(\mathbbm{q}^{\bold e_i}\bold t,z)h_i(\bold t) F_\infty(\bold t,z)^{-1}.
$$
But the left hand side is meromorphic on $\Bbb C$, while 
the right hand side is meromorphic on $\Bbb C\Bbb P^1\setminus 0$. 
Thus both sides are meromorphic on $\Bbb C\Bbb P^1$, hence rational. 
So there exists $A_i(\bold t)\in GL_N(\Bbb C(\bold t,z))$ such that 
$$
F_0(\bold t,z)=A_i(\bold t,z)F_0(\mathbbm{q}^{\bold e_i}\bold t,z)g_i(\bold t). 
$$
Hence $A_i(\bold t,z)$ conjugates 
$\nabla(\bold t)$ into 
$\nabla(\mathbbm{q}^{\bold e_i}\bold t)$.
Using Chevalley's elimination of quantifiers as in \cite{EV}, Subsection 4.1, we can choose $A_i$ to depend rationally of $\bold t$, as claimed.  
\end{proof} 

Theorem \ref{perio} allows one to show that many naturally occurring families of ($\mathbbm{q}$-)difference connections are periodic. For example, this applies to the $q$-hypergeometric family (see \cite{EFK}, Subsection 12.2)\footnote{In this case, the groups $Z_0,Z_\infty$ are both the torus $(\Bbb C^\times)^2$ and the entries of the ``gauge transformations" $g_i,h_i$ are comprised of the $\Gamma_{\mathbbm{q}}$-factors of formula (12.21) in \cite{EFK}.}
and more generally to the quantum KZ equations 
for $GL_N$, as their connection matrices are known to be given, up to a gauge transformation, by Felder's elliptic dynamical R-matrices (see \cite{TV1} for $N=2$ and \cite{E}, Subsections 3.10, 3.11 and \cite{EM} for general $N$). 
Another example is Cherednik's  quantum KZ equations for root systems of type $C$, whose connection matrices are computed in \cite{St} in terms of theta functions. 

\subsection{Example of computation of the spectrum of the $p$-curvature}

In this subsection we give the simplest nontrivial example of computation of the spectrum of the $p$-curvature of a periodic family,\footnote{This example is a special case of the more general results of \cite{KS}.} along the lines of Theorem \ref{main1}. Namely, it describes 
the spectrum of the $p$-curvature for the quantum KZ equation for $GL_N$ valued in the tensor product of two vector representations, as well as for its additive analog. 

Let 
$$
R(q,z):=\begin{pmatrix} \frac{qz-1}{z-1} & \frac{(q-1)z}{z-1}\\
\frac{q-1}{z-1} & \frac{qz-1}{z-1}\end{pmatrix}=1+
(q-1)\begin{pmatrix} \frac{z}{z-1} & \frac{z}{z-1}\\
\frac{1}{z-1} & \frac{z}{z-1}\end{pmatrix},
$$
and let $t\in \Bbb C^\times$. Consider the 2-parameter family of 
$\mathbbm{q}$-difference connections 
\begin{equation}\label{difcon}
\nabla(q,t)=\bold T^{-1}R(q,z)\begin{pmatrix} t& 0\\ 0 & t^{-1}\end{pmatrix}.
\end{equation} 
This is the simplest instance of the quantum KZ connection, which is also 
a $q$-hypergeometric connection. The computation of connection matrices for the corresponding $\mathbbm{q}$-difference equation 
(see e.g. \cite{EFK}, Subsections 12.2, 12.3) along with Theorem \ref{perio} implies that $\nabla(q,t)$ is a periodic family for sufficiently large $p$. 
Also, if $\mathbbm{q}$ is a primitive $p$-th root of $1$ 
then the $p$-curvature of $\nabla(q,t)$ is defined by the formula
$$
C(q,t,z)=\prod_{j=p-1}^{0}\left(R(q,\mathbbm{q}^jz)\begin{pmatrix} t & 0\\ 0 & t^{-1}\end{pmatrix}
\right).
$$

\begin{theorem}\label{isospee} For sufficiently large $p$, $C(q,t,z)$ is isospectral to $R(q^p,z^p)\begin{pmatrix} t^{p} & 0\\ 0 & t^{-p}\end{pmatrix}$. 
\end{theorem} 

\begin{proof} 
Let $p$ be a positive integer and $\bold k$ a field of characteristic not dividing $p$. Let $A_j$, $1\le j\le p$, be square matrices of size $N$ over $\bold k$ and $A:=\sum_{j=1}^p A_j$. 

\begin{lemma}\label{leee1} Let $q$ be a variable and suppose that the characteristic polynomial 
of the matrix 
$$
M:=\prod_{j=1}^p (1+(q-1)A_j)
$$
depends only on $q^p$. Then $M$ is isospectral to $1+\frac{q^p-1}{p}A$. 
\end{lemma}  

\begin{proof} We have $M-1=(q-1)A+O((q-1)^2)$, $q\to 1$. Thus 
$$
{\rm Tr}\wedge^m (M-1)=(q-1)^m{\rm Tr}\wedge^m A+O((q-1)^{m+1}),\ q\to 1. 
$$
On the other hand, ${\rm Tr}\wedge^m (M-1)$ is a polynomial in 
$q$ of degree $pm$ which is, in fact, a polynomial of $q^p$ (thus, of degree $m$). 
Thus ${\rm Tr}\wedge^m (M-1)=\frac{(q^p-1)^m}{p^m}{\rm Tr}\wedge^m A$, so $M-1$ is isospectral to $\frac{q^p-1}{p}A$, as desired. 
\end{proof} 

\begin{lemma}\label{propo} Let $\bold t:={\rm diag}(t_1,...,t_N)$ and $M(\bold t):=\prod_{j=1}^p ((1+(q-1)A_j)\bold t)$. If the characteristic polynomial of $M(\bold t)$ depends only on $\bold t^p$ and $q^p$ then $M(\bold t)$ is isospectral to $(1+\frac{q^p-1}{p}A)\bold t^p$. 
\end{lemma}  

\begin{proof} 
By assumption, 
$$
{\rm Tr}\wedge^m M(\bold t)=\sum_{i_1<...<i_m}C_{i_1...i_m}t_{i_1}^p...t_{i_m}^p.
$$
The coefficient $C_{i_1...i_m}$ can be computed by setting $t_j$ to $1$ when 
$j=i_k$ for some $k$ and $t_j=0$ otherwise. In this case 
$$
C_{i_1...i_m}={\rm Tr}\wedge^m M(\bold t)=\det \prod_{j=1}^p (1+(q-1)P_{i_1...i_m}A_jP_{i_1...i_m}),
$$
where $P_{i_1,...,i_m}$ is the projector to the subspace spanned by $\bold e_{i_1},...,\bold e_{i_m}$ 
which kills the other standard basis vectors, and the determinant is taken over the image of this projector. So by Lemma \ref{leee1} we get 
$$
C_{i_1...i_m}=\det(1+\tfrac{q^p-1}{p}P_{i_1...i_m}AP_{i_1...i_m}).
$$
This implies that for general $\bold t$
$$
{\rm Tr}\wedge^m M(\bold t)={\rm Tr}\wedge^m ((1+\tfrac{q^p-1}{p}A)\bold t^p),
$$
as claimed.
\end{proof} 

Now we apply Lemma \ref{propo} to the family \eqref{difcon}, which we can do since this family is periodic.
If $\mathbbm{q}$ is a primitive $p$-th root of $1$ 
then 
$$
\sum_{j=0}^{p-1}\frac{1}{\mathbbm{q}^jz-1}=\frac{p}{z^p-1},\
\sum_{j=0}^{p-1}\frac{\mathbbm{q}^jz}{\mathbbm{q}^jz-1}=\frac{pz^{p}}{z^p-1}.
$$
Thus it follows from Lemma \ref{propo} that the $p$-curvature 
$C(q,t,z)$ is isospectral to 
$$
\left(1+(q^p-1)\begin{pmatrix} \frac{z^p}{z^p-1} & \frac{z^p}{z^p-1}\\
\frac{1}{z^p-1} & \frac{z^p}{z^p-1}\end{pmatrix}\right)\begin{pmatrix} t^{p} & 0\\ 0 & t^{-p}\end{pmatrix}=R(q^p,z^p)\begin{pmatrix} t^{p} & 0\\ 0 & t^{-p}\end{pmatrix}.
$$
\end{proof} 

\begin{remark} The shift operator in this case can be constructed explicitly and is defined for all $p$, which allows us to conclude that Theorem \ref{isospee} in fact holds for all $p$. 
\end{remark} 

The additive limit of the connection \eqref{difcon} is obtained by writing $\mathbbm{q}=1+\varepsilon$, 
$z=1+u\varepsilon$, $q=1+s\varepsilon$, and sending $\varepsilon$ to $0$. Then 
$$
R(q,z)\to R_{\rm add}(s,u):=1+
\frac{s}{u}\begin{pmatrix} 1 & 1\\
1 & 1\end{pmatrix},
$$
so the periodic family $\nabla(q,t)$ degenerates to the mixed-periodic family
\begin{equation}\label{difcon1}
\nabla_{\rm add}(s,t)=T^{-1}R_{\rm add}(s,u)\begin{pmatrix} t& 0\\ 0 & t^{-1}\end{pmatrix},
\end{equation} 
where $(Tf)(u)=f(u+1)$ (it is periodic in $s$ and infinitesimally periodic in $t$). Thus by taking the additive limit in Theorem \ref{isospee}, we obtain the following corollary.\footnote{Namely, recall that if $\mathbbm{q}$ is a primitive $p$-th root of unity in 
the cyclotomic ring $\Bbb Z_p[\mathbbm{q}]$ then 
$\mathbbm{q}=1+\varepsilon$ where $\varepsilon$ is a uniformizer of this ring with $p$-adic valuation $v_p(\varepsilon)=\frac{1}{p-1}$, and such that $\varepsilon^p=-p\varepsilon+O(\varepsilon^{p+1})$. Thus 
$q^p=(1+s\varepsilon)^p=1+(s^p-s)\varepsilon^p+O(\varepsilon^{p+1})$, 
$z^p=(1+u\varepsilon)^p=1+(u^p-u)\varepsilon^p+O(\varepsilon^{p+1})$. 
Hence the reduction of $\frac{q^{p}-1}{z^{p}-1}$ from the cyclotomic ring $\Bbb Z_p[\mathbbm{q}]$ to $\Bbb F_p$ equals $\frac{s^p-s}{u^p-u}$.}

\begin{corollary} The $p$-curvature $C_{\rm add}(s,t,u)$ of the connection \eqref{difcon1} reduced to characteristic $p$ is isospectral to $R_{\rm add}(s^p-s,u^p-u)\begin{pmatrix} t^p& 0\\ 0 & t^{-p}\end{pmatrix}$. 
\end{corollary} 

\begin{remark}\label{ks} Most of the (mixed-)periodic pencils of flat connections considered in Section 5 of \cite{EV} and Section 4 of this paper admit 
difference and $\mathbbm{q}$-difference analogs. Unfortunately, 
the technique of this subsection is not powerful enough to prove 
a counterpart of Theorem \ref{isospee} for most of such ($\mathbbm{q}$-)difference connections, as it uses that the connection form $B$ is linear in $q$ and $\bold t$. Yet we expect that the natural analog of Theorem \ref{isospee} is true for all such connections (i.e., 
the $p$-curvature is isospectral to the suitably defined Frobenius twist of the connection form).\footnote{For instance, in the case of $\mathbbm{q}$-difference Dunkl-Cherednik connections, we expect that there is a \linebreak $q$-deformation of the theory of 
Subsection \ref{rca}, based on the representation theory of Cherednik's double affine Hecke algebras (DAHA) at roots of unity, \cite{Ch}. In the $q$-Toda limit, 
DAHA should be replaced by nil-DAHA (\cite{Ch2}).} In fact, recently a method to obtain these results for a large class of connections (qKZ equations attached to Nakajima varieties) using their representation as generalized $\mathbbm{q}$-Gauss-Manin connections arising from enumerative geometry was proposed in \cite{KS}. This method moreover provides an explicit base change matrix identifying the Frobenius twisted connection form with the $p$-curvature.
\end{remark}

\end{document}